\documentclass[reqno]{amsart}

\usepackage{graphicx}
\usepackage{amssymb}
\usepackage[all]{xy}
\usepackage{enumitem}

 \newtheorem{theorem}{Theorem}[section]
 \newtheorem{proposition}[theorem]{Proposition}
 
 \newtheorem{remark}[theorem]{Remark}
 
 \newtheorem{definition}[theorem]{Definition}
 
 \newtheorem{example}[theorem]{Example}

 \newcommand{\ZZ}{{\mathbb{Z}}}
 \newcommand{\QQ}{{\mathbb{Q}}}
 \newcommand{\RR}{{\mathbb{R}}}

  \newcommand{\gkn}{G_k(\RR^n)}
  \newcommand{\ow}{\overline{w}}
	\newcommand{\FF}{{\mathbb{F}}}
	\newcommand{\qbinom}[3]{\left[\begin{matrix}#1\\#2\end{matrix}\right]_{#3}}
	\newcommand{\sqbinom}[3]{\left[\begin{smallmatrix}#1\\#2\end{smallmatrix}\right]_{#3}}

 \parindent 0pt
 \parskip 1.5mm

\begin{document}

\title[Triangulations with few vertices]{How many simplices are needed\\ to triangulate a Grassmannian?} 

\author[Dejan Govc]{Dejan Govc}
\address{$^*$ Institute of Mathematics, University of Aberdeen, Aberdeen, UK}
\email{dejan.govc@gmail.com}

\author[Wac{\l}aw Marzantowicz]{Wac{\l}aw Marzantowicz$^{**}$}
\address{$^{**}$ \;Faculty of Mathematics and Computer Science, Adam Mickiewicz University 
of Pozna{\'n}, ul. Umultowska 87, 61-614 Pozna{\'n}, Poland.}
\email{marzan@amu.edu.pl}
\thanks{$^{**}$ Supported by the Research Grant NCN Grant  2015/19/B/ST1/01458
and Sheng 1 UMO-2018/30/Q/ST1/00228.}

\author[Petar Pave\v{s}i\'{c}]{Petar Pave\v si\'{c}$^{***}$}
\address{$^{***}$ Faculty of Mathematics and Physics, University of Ljubljana,
Jadranska 21,  1000 Ljubljana, Slovenija}
\email{petar.pavesic@fmf.uni-lj.si}
\thanks{$^{***}$ Supported by the Slovenian Research Agency program P1-0292 and grants N1-0083, 
N1-0064.}

\date{\today}

\begin{abstract}
We compute a lower bound for the number of simplices that are needed to triangulate the
Grassmann manifold  $\gkn$. In particular, we show that the number of top-dimensional
simplices grows exponentially with $n$. More precise estimates are given for $k=2,3,4$.
Our method can be used to estimate the minimal size of triangulations
for other spaces, like Lie groups, flag manifolds, Stiefel manifolds etc. 
\ \\[3mm]
{\it Keywords}: minimal triangulation, Grassmann manifold, cup-length, Lower Bound Theorem,
Manifold g-Theorem \\
{\it AMS classification: 57Q15, 52B05, 52B70, 14M15} 
\end{abstract}

\maketitle


\section{Introduction}

Manifolds are often studied under the assumption that they can be triangulated, even when there are no explicit 
triangulations available. Indeed, triangulability allows the use of a whole spectrum of PL-techniques and
constructions. However, 
with an explicit triangulation at hand one can go even further and use computers to answer other specific
questions about the topology of the manifold. For example, there is a long-standing problem concerning 
the computation of rational Pontrjagin classes of combinatorial manifolds, see Gelfand-MacPherson
\cite{Gelfand-MacPherson} and Gaifullin \cite{Gaifullin}. In particular Gaifullin \cite{Gaifullin} described 
an explicit algorithm for the computation of the first Pontrjagin class of a manifold. The computation 
uses only the combinatorial structure of the triangulation and does not need any additional data. In spite
of that the explicit computations remain very hard. We are aware of only two examples of explicit computations: Milin \cite{Milin} used an earlier formula by Gabrielov-Gelfand-Losik
to compute the first Pontrjagin class of the complex projective plane, while Gorodkov
\cite{Gorodkov} used Gaifulin's formula to prove that a triangulation proposed by Brehm
and K\"uhnel \cite{Brehm-Kuhnel} actually represents a quaternionic 
projective plane. In both cases the main difficulty was represented by the size of the
triangulation. For
example in order to complete the computation Gorodkov \cite{Gorodkov} had to examine the links
of 3003 4-dimensional simplices.

In this paper we will consider the universal setting for characteristic classes,
Grassmann manifolds. 
We will show that the number of simplices in any triangulation of a Grassmann manifold must 
be huge, so that the computation of universal Pontrjagin classes using the combinatorial
formulas is probably beyond reach at this moment. 
For instance, we will show that every triangulation of the relatively small manifold
$G_3(\RR^9)$ requires thousands of facets (top-dimensional
faces) and hundreds of millions of simplices! 
This may look surprising in view of the fact that Grassmannian manifolds admit very efficient decompositions
into so-called Schubert cells. Specifically, the standard decomposition of $\gkn$ has ${n \choose k}$ cells (of
which only one 0-dimensional and one top-dimensional cell). Unfortunately, Schubert decomposition is a 
CW-decomposition 
that is very far from being regular. To refine it to a triangulation, one would need to find simplicial 
approximations of the attaching maps.

The problem of efficient or even minimal triangulations of manifolds has a long history but is still very 
much open. Minimal triangulations are known for spheres, surfaces, certain sphere bundles over circles ('Csazsar
 tori', see \cite{Kuhnel}) and for 
only a handful of other, low-dimensional examples (see \cite{Lutz}). 
There are also some coarse estimates: for example it is known that 
every triangulation of $\RR P^n$ requires at least $(n+1)(n+2)/2$ vertices, and on the other hand, there are explicit
triangulations of $\RR P^n$ with $2^{n+1}-1$ vertices (see \cite{Lutz}).

We have recently developed a method \cite{GMP} that uses the information about the cohomology 
ring of a space to derive lower bounds
for the number of vertices that are needed to triangulate that space. In this paper we apply 
the method
to estimate the size of triangulation of Grassmann manifolds. Our computation has three main 
ingredients.
\begin{enumerate}
\item R. Stong's \cite{Stong} determination of the height of the first Stiefel-Whitney 
class $w_1$ in $H^*(\gkn;\ZZ_2)$, and of
non-trivial products in the top dimension of $H^*(\gkn;\ZZ_2)$ for $k=2,3,4$.
\item Lower bounds for the number of vertices in a triangulation of a space whose cohomology 
admits certain 
non-trivial products \cite{GMP}.
\item The Lower  Bound Theorem (LBT) of Gromov \cite{Gromov} and Kalai \cite{Kalai} that
estimates the number of faces in a triangulation of a (pseudo)manifold with a given number 
of vertices. Moreover, we can obtain better estimates using results of Novik-Swartz
\cite{Novik-Swartz-I,Novik-Swartz-II,Novik-Swartz-III} and Adiprasito \cite{Adiprasito}.
\end{enumerate}

The paper is organized as follows. In the next section we give a brief overview of the 
cohomology of Grassmann manifolds including Stong's results and explicit description of 
the Poincar\'e polynomial. In Section 3  we use the non-trivial cup-products in 
$H^*(\gkn;\ZZ_2)$ 
to estimate the minimal number of vertices in a triangulation of $\gkn$. We first compute
lower bounds for arbitrary $k$, and then we obtain better estimates for $k=2,3,4$. 
Using LBT, this allows us to estimate the number of facets and of all simplices in a 
triangulation of $\gkn$. In the last Section 4 we obtain even better estimates in the 
orientable case where we can use the recently proved generalization of LBT. 

\section{Review of the cohomology of finite Grassmannians}

In this paper we denote by $\gkn$ the space of all $k$-dimensional linear subspaces of $\RR^n$. 
This space is a manifold (actually a non-singular algebraic variety) of dimension $k\cdot(n-k)$. The
correspondence between a subspace and its orthogonal complement determines a homeomorphism between $\gkn$ 
and $G_{n-k}(\RR^n)$, so we may assume without loss of generality  that $k\le n/2$.
The $\ZZ_2$-cohomology ring of $\gkn$ can be described as follows (cf. \cite{Stong}). 
Let $w_1,\ldots,w_k\in H^*(\gkn;\ZZ_2)$ denote the Stiefel-Whitney classes of the canonical
$k$-dimensional vector bundle over $\gkn$, and let $\ow_1,\ldots,\ow_{n-k}\in H^*(\gkn;\ZZ_2)$
denote the dual  Stiefel-Whitney classes 
(i.e., the Stiefel-Whitney classes of the orthogonal complement of the canonical bundle).
The Stiefel-Whitney classes and their duals are related by the formula
$$w\cdot\ow=(1+w_1+\ldots + w_k)\cdot(1+\ow_1+\ldots+\ow_{n-k})=1, $$ 
so the dual classes can be recursively expressed as polynomials in variables $w_1,\ldots,w_k$.
Then $H^*(\gkn;\ZZ_2)$ can be described as the quotient of the $\ZZ_2$-polynomial ring,
generated by the Stiefel-Whitney classes and their duals, modulo the relation $w\cdot \ow=1$:
$$ H^*(\gkn;\ZZ_2)\cong\ZZ_2[w_1,\ldots,w_k,\ow_1,\ldots,\ow_{n-k}]/(w\cdot \ow=1).$$

\begin{example}
In the cohomology of $G_2(\RR^3)$ we have the relation 
$$(1+w_1+w_2)\cdot (1+\ow_1+\ow_2+\ow_3)=1,$$ 
from which 
we obtain $\ow_1=w_1,\ \ow_2=w_1^2+w_2,\  \ow_3=w_1^3,\  w_1^4+w_1^2w_2+w_2^2=0$ and $w_1^3w_2=0$. Therefore
$$ H^*(G_2(\RR^5;\ZZ_2)\cong\ZZ_2[w_1,w_2]/(w_1^4+w_1^2w_2+w_2^2,\ w_1^3w_2).$$
\end{example}

In spite of the very explicit description, computations in $H^*(\gkn;\ZZ_2)$ are all but straightforward,
because it is difficult to determine whether a given polynomial in $w_1,\ldots,w_k$ is contained in the
ideal. R.~Stong \cite[p. 103-104]{Stong} proved the following results, which will be essential for our
computations.

\begin{proposition}
\label{prop:height w1}
If $k\ge 2$ and $2^s< n\le 2^{s+1}$ then the \emph{height} of $w_1$ (i.e., the maximal $m$, such that
$w_1^m\ne 0$ in $ H^*(\gkn;\ZZ_2)$) is 
$$\mathrm{height}(w_1)=\left\{\begin{array}{ll}
2^{s+1}-2 & k=2 \mathrm{\ or\ } k=3, n=2^s+1\\
2^{s+1}-1 & \mathrm{otherwise}.
\end{array}\right.$$
\end{proposition}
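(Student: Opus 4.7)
The plan is to analyze $w_1^m$ via the Schubert calculus description of $H^*(\gkn;\ZZ_2)$. Recall that this cohomology carries a $\ZZ_2$-basis of Schubert classes $[\sigma_\lambda]$ indexed by partitions $\lambda=(\lambda_1\ge\cdots\ge\lambda_k\ge 0)$ fitting inside a $k\times(n-k)$ rectangle, and $w_1=[\sigma_{(1,0,\ldots,0)}]$ is the special Schubert class in degree one. Iterating Pieri's rule (each application adds a single box) yields
\[
w_1^m \;=\; \sum_{\substack{\lambda\subset k\times(n-k)\\ |\lambda|=m}} f^\lambda\,[\sigma_\lambda],
\]
where $f^\lambda$ is the number of standard Young tableaux of shape $\lambda$. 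Since the Schubert classes form a basis, $\mathrm{height}(w_1)$ is the largest $m$ for which some coefficient $f^\lambda$ in this expansion is odd.

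This reduces the proposition to a parity question about the numbers $f^\lambda$, which I would attack using the hook length formula together with Lucas' theorem on $2$-adic valuations of binomials. The case $k=2$ is clean: two-row partitions $(a,b)$ satisfy $f^{(a,b)}=\binom{a+b}{b}-\binom{a+b}{b-1}$. When $m=2^{s+1}-1$, the binary expansion of $m$ consists entirely of ones, so by Lucas both binomials are odd for every admissible $b\ge 1$, their difference vanishes mod $2$, and $w_1^{2^{s+1}-1}=0$. When $m=2^{s+1}-2$, the two binomials have opposite parities, so every valid two-row shape has odd $f^{(a,b)}$, and at least one such shape fits in the rectangle because $n>2^s$; this pins down $\mathrm{height}(w_1)=2^{s+1}-2$ for $k=2$.

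For $k\ge 3$ the same strategy applies but more rows must be managed. In the generic case one expects $\mathrm{height}(w_1)=2^{s+1}-1$, which would be established by exhibiting a single partition $\lambda$ of size $2^{s+1}-1$, with at most $k$ parts, fitting inside $k\times(n-k)$, and with $f^\lambda$ odd; a suitable three-row shape cut out by the binary digits of $n$ is a natural candidate. The exceptional regime $k=3$, $n=2^s+1$ corresponds precisely to the rectangle $3\times(2^s-2)$ being too narrow to accommodate any partition of size $2^{s+1}-1$ with odd $f^\lambda$, forcing the height to drop by one.

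The hard part will be the sharp analysis of this exceptional case: one must show that every three-row partition of size $2^{s+1}-1$ fitting in the $3\times(2^s-2)$ rectangle has even $f^\lambda$, and conversely exhibit a partition of size $2^{s+1}-2$ with odd $f^\lambda$. Both require careful bookkeeping of binary carries inside the hook length formula, which is ultimately why the answer for $k=3$ bifurcates exactly at $n=2^s+1$.
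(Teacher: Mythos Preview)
The paper does not supply its own proof of this proposition; it is quoted from Stong \cite{Stong}, so there is no in-paper argument to compare your attempt against. Your Schubert-calculus reformulation---writing $w_1^m=\sum_{|\lambda|=m} f^\lambda[\sigma_\lambda]$ and reducing everything to the parity of the standard-tableau counts $f^\lambda$---is a legitimate and genuinely different route from Stong's, who works with the polynomial presentation $\ZZ_2[w_1,\dots,w_k]/(\ow_{n-k+1},\dots,\ow_n)$ and the explicit recursive form of the dual classes $\ow_j$.

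Your $k=2$ analysis is correct and essentially complete. For $k\ge 3$, however, there is a gap beyond the ``careful bookkeeping'' you already flag. Pinning down $\mathrm{height}(w_1)=2^{s+1}-1$ requires two halves: exhibiting one partition of size $2^{s+1}-1$ inside the $k\times(n-k)$ box with $f^\lambda$ odd (the lower bound), and showing that \emph{every} partition of size $2^{s+1}$ inside that box has $f^\lambda$ even (the upper bound, i.e.\ $w_1^{2^{s+1}}=0$). Your outline addresses only the first and never mentions the second. The upper bound is not free: for $k\ge 3$ the dimension $k(n-k)$ typically far exceeds $2^{s+1}$, so vanishing is not a degree constraint, and proving that all the relevant $f^\lambda$ are even is a substantial combinatorial claim in its own right. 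In Stong's ring-theoretic approach the upper bound is the easier direction, coming from the relations $\ow_j=0$; in your tableau approach it is arguably the harder half, and the sketch would need to confront it explicitly before it constitutes a proof. The same omission affects the exceptional case $k=3$, $n=2^s+1$: you discuss why $2^{s+1}-1$ should fail there, but you still owe an argument that $2^{s+1}-2$ is not itself exceeded.
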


\begin{proposition}
\label{prop:products in g3n}
Let $2^s< n\le 2^{s+1}$. Then the following classes in $H^{3(n-3)}(G_3(\RR^n);\ZZ_2)$ are non trivial
($1\le p$, $0<t<2^{p-1}$):
$$\begin{array}{ll}
\mathrm{if\ } n=2^{s+1}-2^p+1, & \mathrm{then\ \ } w_1^{2^{s+1}-2} \cdot w_2^{2^{s+1}-3\cdot 2^{p-1}-2}\ne 0; \\
\mathrm{if\ } n=2^{s+1}-2^p+1+t, & \mathrm{then\ \ } w_1^{2^{s+1}-1} \cdot w_2^{2^{s+1}-3\cdot 2^{p-1}-1}\cdot w_3^{t-1}\ne 0; \\
\mathrm{if\ } n=2^{s+1}, & \mathrm{then\ \ } w_1^{2^{s+1}-1} \cdot w_2^{2^{s+1}-4}\ne 0.
\end{array}$$
\end{proposition}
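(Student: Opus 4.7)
The plan is to work in the Schubert basis of $H^*(G_3(\RR^n);\ZZ_2)$ and exploit Poincar\'e duality. Since $G_3(\RR^n)$ is a closed manifold of dimension $3(n-3)$, the top cohomology group $H^{3(n-3)}(G_3(\RR^n);\ZZ_2)$ is one-dimensional, generated by the rectangular Schubert class $\sigma_{(n-3,n-3,n-3)}$. Thus each of the three nonvanishing assertions amounts to showing that, when expanded in the Schubert basis, the given monomial in $w_1,w_2,w_3$ has coefficient $1$ (equivalently, an odd integer lift) on $\sigma_{(n-3,n-3,n-3)}$.

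The first step is to recall the identification $w_j=\sigma_{(1^j)}$ of Stiefel-Whitney classes with single-column Schubert classes, so that multiplication by $w_j$ is described by the dual Pieri rule: add a vertical strip of $j$ boxes (no two in the same row), summed over all legal shapes fitting in the $3\times(n-3)$ bounding box. In the three-row setting, multiplying by $w_3=\sigma_{(1,1,1)}$ forces one box in each row, hence acts as the shift $(\lambda_1,\lambda_2,\lambda_3)\mapsto(\lambda_1+1,\lambda_2+1,\lambda_3+1)$. Consequently the factor $w_3^{t-1}$ appearing in case~(b) merely translates the target shape, and all three cases reduce to computing the coefficient of a specific rectangular shape $(m,m,m)$ in a product $\sigma_{(1)}^{a}\sigma_{(1,1)}^{b}$.

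The second step is the combinatorial evaluation. After iterated application of Pieri's rule, this coefficient counts sequences
\[
\emptyset=\mu_0\subset\mu_1\subset\cdots\subset\mu_{a+b}=(m,m,m),
\]
in which $a$ of the prescribed steps add a single box and $b$ of them add a vertical $2$-strip, subject to the constraint $\ell(\mu_i)\le 3$ and $(\mu_i)_1\le n-3$ at every stage. A generating-function manipulation (or a direct application of the Jacobi-Trudi identity) expresses this count as a signed sum of multinomial coefficients, whose parity can then be evaluated by Lucas' theorem (equivalently Kummer's theorem on carries in base-$2$ addition). The numerical parameters in the proposition, together with the decomposition $n=2^{s+1}-2^p+1+t$ for $0\le t<2^{p-1}$, have been chosen precisely so that the binary expansions involved contain no carries, producing an odd total.

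The main obstacle lies in controlling the column bound $(\mu_i)_1\le n-3$ throughout the Pieri expansion: without this bound the count is an elegant product of multinomial coefficients, but each ``overshooting'' sequence must be subtracted, producing an alternating sum whose parity is nontrivial to analyze. Managing this boundary correction is what forces the three-part case split, with case~(a) treating the boundary $n=2^{s+1}-2^p+1$ separately, case~(b) the generic interior, and case~(c) the extremal value $n=2^{s+1}$ where Proposition~\ref{prop:height w1} demands the maximal power $w_1^{2^{s+1}-1}$. In each regime the binary arithmetic of $n$ must be matched against the combinatorial exponents so that exactly one surviving sequence contributes mod $2$; this matching is the technical heart of the argument.
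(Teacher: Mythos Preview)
The paper does not supply its own proof of this proposition: it is quoted verbatim from Stong \cite[pp.~103--104]{Stong} and stated without argument. So there is nothing in the paper to compare your attempt against beyond the bare citation.

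As for your proposal itself: the strategy you outline is sound in principle---the identification $w_j=\sigma_{(1^j)}$, the dual Pieri rule, the observation that $w_3$ acts as a uniform shift in the three-row setting, and the reduction to a parity count via Lucas/Kummer are all correct ingredients. But what you have written is a plan, not a proof. You explicitly defer the computation (``a generating-function manipulation \ldots\ expresses this count as a signed sum of multinomial coefficients, whose parity can then be evaluated by Lucas' theorem'') and then openly label the boundary correction coming from the column constraint $(\mu_i)_1\le n-3$ as ``the technical heart of the argument'' without carrying it out. Nowhere do you write down the actual multinomial expression, verify the carry-free condition for the specific exponents $2^{s+1}-2$, $2^{s+1}-3\cdot 2^{p-1}-2$, etc., or show that the overshooting sequences cancel mod~$2$. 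These are precisely the computations that constitute the proof; asserting that ``the numerical parameters \ldots\ have been chosen precisely so that the binary expansions involved contain no carries'' is circular, since that is the content of the proposition. If you want an independent argument along Schubert-calculus lines rather than simply citing Stong, you must actually execute the parity analysis in each of the three cases.
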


\begin{proposition}
\label{prop:products in g4n}
Let $2^s< n\le 2^{s+1}$. Then the following classes in $H^{4(n-4)}(G_4(\RR^n);\ZZ_2)$ are non trivial
(\,$0\le r<s$, $0\le t<2^r$):
$$\begin{array}{ll}
\mathrm{if\ } n=2^s+1, & \mathrm{then\ \ } w_1^{2^{s+1}-2} \cdot w_2^{2^s-5}\ne 0, \ 
w_1^{2^{s+1}-1} \cdot w_2^{2^s-7}\cdot w_3\ne 0;\\
\mathrm{if\ } n=2^s+2^r+1+t, & \mathrm{then\ \ } w_1^{2^{s+1}-2} \cdot w_2^{2^s+2^{r+1}-5}\cdot w_4^t\ne 0,\\

 & \mathrm{also\ } w_1^{2^{s+1}-1} \cdot w_2^{2^s+2^{r+1}-7}\cdot w_3\cdot w_4^t\ne 0 \mathrm{\ if\ } r>0 ;
\end{array}$$
\end{proposition}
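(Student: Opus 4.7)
The plan is to establish non-triviality of each listed monomial by reducing to Stong's framework for cup-product computations in the $\ZZ_2$-cohomology of Grassmannians. Recall that $H^*(G_4(\RR^n);\ZZ_2)$ is a quotient of the polynomial ring $\ZZ_2[w_1,w_2,w_3,w_4]$ by the ideal $I_n$ whose generators are the dual classes $\ow_{n-3},\ow_{n-2},\ow_{n-1},\ow_n$, each of which is expressible as a polynomial in $w_1,\ldots,w_4$ via the formal inversion $(1+w_1+w_2+w_3+w_4)\cdot \ow=1$. A monomial $M\in\ZZ_2[w_1,w_2,w_3,w_4]$ of top degree $4(n-4)$ is nonzero in $H^*(G_4(\RR^n);\ZZ_2)$ precisely when $M\notin I_n$.

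To detect this, I would first fix a canonical generator of the one-dimensional group $H^{4(n-4)}(G_4(\RR^n);\ZZ_2)$, most conveniently the Schubert class corresponding to the full $4\times(n-4)$ rectangle; an explicit representative is $\ow_{n-4}^{\,4}$, whose non-triviality is guaranteed by Poincar\'e duality together with the fact that $\ow_{n-4}$ is the top Stiefel--Whitney class of the complementary $(n-4)$-plane bundle. Each claim of the proposition then becomes the statement that the indicated monomial in $w_1,w_2,w_3,w_4$ agrees modulo $I_n$ with this generator (in $\ZZ_2$ this is just equality).

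For the actual verification, I would expand each candidate monomial in the Schubert basis of $H^{4(n-4)}$, indexed by partitions $\lambda\subseteq(n-4)^4$, using Pieri-type multiplication rules, or equivalently Jacobi--Trudi-style determinant identities that rewrite Schur polynomials in the Chern roots in terms of the $w_i$'s. The exponents of $w_1$ and $w_2$ appearing in each subcase have been selected to match the $2$-adic expansion $n=2^s+2^r+1+t$, so the binomial coefficients that emerge are governed by Lucas's theorem; the expected outcome is that all but one Schubert coefficient vanishes mod $2$, leaving the top class with coefficient $1$.

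The main obstacle will be the combinatorial bookkeeping in the mixed subcases, where multiplication by the auxiliary factors $w_3$ and $w_4^{\,t}$ shifts the Schubert partition, and one must still verify that the full rectangular partition survives with odd multiplicity after carrying out the Pieri expansion and reducing modulo $I_n$. Because the exponents of $w_1$ and $w_2$ have been tuned precisely to the binary expansion of $n$, the parity computations should collapse cleanly, along the same lines as Stong's original argument for $G_3$ in Proposition~\ref{prop:products in g3n}; the subcase $r=0$ (where no $w_3$ factor is present) serves as a useful consistency check against that earlier calculation.
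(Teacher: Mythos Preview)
The paper does not give its own proof of this proposition: it is quoted verbatim as a result of Stong \cite[p.~103--104]{Stong}, together with Propositions~\ref{prop:height w1} and~\ref{prop:products in g3n}. So there is no in-paper argument to compare against; what you have written is a proposed reconstruction of Stong's calculation.

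Your outline points in the right direction. Working in the Schubert basis, tracking top-degree monomials against the full-rectangle class, and controlling parities via Lucas's theorem is exactly the machinery Stong uses, and the observation that the exponents are keyed to the binary expansion of $n$ is the right organizing principle. However, what you have is a plan, not a proof. Every substantive step is stated in the conditional (``I would expand\ldots'', ``the expected outcome is\ldots'', ``the parity computations should collapse cleanly''), and you explicitly flag the decisive computation---verifying that the rectangular Schubert class survives with odd coefficient after the Pieri expansions and reduction modulo $I_n$---as ``the main obstacle'' without carrying it out. That is precisely where all the content lies; the rest is set-up. In particular, the mixed cases with $w_3\cdot w_4^{\,t}$ require genuine case analysis depending on the carries in the binary addition $2^s+2^r+1+t$, and nothing in your text pins down why the relevant binomial coefficients are odd.

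If you want this to stand as an independent proof rather than a pointer to \cite{Stong}, you need to actually execute at least one of the subcases in full (say $n=2^s+1$), exhibiting the Schubert expansion and the Lucas-theorem parity check explicitly, and then indicate precisely how the remaining cases follow by the same mechanism. As written, the proposal is a correct strategy sketch but not yet a proof.
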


\begin{example}
For $G_3(\RR^9)$ we may write $9=2^4-2^3+1$. Then the height of $w_1$ is $2^{3+1}-2=14$ by 
Proposition \ref{prop:height w1} and $w_1^{14}\cdot w_2^2\ne 0$ in $H^{18}(G_3(\RR^9);\ZZ_2)$ 
by Proposition \ref{prop:products in g3n}.
\end{example}

We will also need the explicit computation of the Poincar\'e polynomial for the rational cohomology of
Grassmannians. Following Casian-Kodama \cite[Theorem 5.1]{Casian-Kodama}: 
the Poincar\'e polynomial for $G_k(\RR^n)$ in the case $(k,n)=(2j,2m),(2j,2m+1)$ or $(2j+1,2m+1)$ is given by
\[
P_{k,n}(t)=\qbinom{m}{j}{t^4}
\]
and for $(k,n)=(2j+1,2m)$ it is given by
\[
P_{k,n}(t)=(1+t^{2m-1})\qbinom{m-1}{j}{t^4}.
\]
In both cases, $\sqbinom{n}{k}{q}$ denotes the $q$-analog of the binomial coefficient. More 
explicitly, it is expressed by the formula
\[
\qbinom{n}{k}{q}=\frac{\prod_{i=1}^n(1-q^i)}{\prod_{i_1=1}^{k}(1-q^{i_1})\prod_{i_2=1}^{n-k}(1-q^{i_2})}.
\]
These expressions have various nice properties. For example, they satisfy a recurrence relation similar to the one for binomial coefficients:
\[
\qbinom{n+1}{k}{q}=q^k\qbinom{n}{k}{q}+\qbinom{n}{k-1}{q}.
\]
For our purposes, the main property of the symbol $\sqbinom{n}{k}{q}$ (proved easily 
by induction) is that it is a polynomial of degree $k(n-k)$ in $q$ 
and that all of its coefficients are strictly positive integers.

\section{Lower bounds for the number of simplices}

We will first use the results of \cite{GMP} to estimate the minimal number of vertices in a triangulation
of $\gkn$. Following \cite{GMP},  let us denote by $\Delta(X)$ the minimal number of vertices in a triangulation
of a triangulable space $X$. It will be convenient to state the results in the following form.

\begin{proposition} (\cite[Theorem 2.2]{GMP})
\label{prop:height estimate}
If $x^n\ne 0$ for some $x\in H^1(X)$ (with arbitrary coefficients), then 
$$\Delta(X)\ge \frac{(n+1)(n+2)}{2}.$$
\end{proposition}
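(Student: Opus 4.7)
The plan is to generalize the classical Arnoux--Marin bound $\Delta(\RR P^n) \ge (n+1)(n+2)/2$ to any space $X$ admitting a one-dimensional cohomology class of height at least $n$. Fix a triangulation $K$ of $X$ with $v = \Delta(X)$ vertices; the goal is to bound $v$ from below.

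First I would pick a representative $\phi$ of $x$ as a simplicial $1$-cocycle on $K$, which combinatorially is a labelling of the (at most $\binom{v}{2}$) edges, subject to cocycle conditions on $2$-simplices and taken modulo coboundaries of vertex-labellings. The non-vanishing of $x^n$ then translates into the existence of an $n$-simplex $[v_0,\dots,v_n]$ of $K$ on which $\phi(v_0,v_1)\cdot\phi(v_1,v_2)\cdots\phi(v_{n-1},v_n)\ne 0$, together with the cohomological (non-coboundary) information. A one-shot Lusternik--Schnirelmann-type argument via the open-star cover $\{\St(v_i)\}_{i=1}^{v}$ already gives $\cat(X)\le v-1$ and hence $v\ge n+1$, but this is much too weak.

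The sharper bound comes from exploiting the full chain of non-zero powers $x,x^2,\dots,x^n$ simultaneously, rather than just from cup-length. Each $x^i$ is a non-trivial class in $H^i(X;R)$ and therefore imposes its own constraint on the combinatorics of $K$. The strategy is an induction on $n$: assuming the bound $\binom{n+1}{2}$ already when $x^{n-1}\ne 0$, show that the passage from height $n-1$ to height $n$ forces at least $n+1$ further vertices. Summing the telescoping contributions gives $\sum_{i=1}^{n+1}i=(n+1)(n+2)/2$, which is the claimed bound.

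The main obstacle is the inductive step, i.e.\ quantifying exactly how many new vertices must be added to support one more non-vanishing power. This relies essentially on the fact that $x$ has degree one (so that $x^i$ is forced to live in the $i$-th cohomology group, matching the skeletal filtration of $K$); for a class of higher degree the corresponding increment, and hence the whole bound, would change. The formal mechanism, carried out in \cite{GMP}, combines a simplicial-approximation argument for the classifying map $X\to K(R,1)$ with a direct vertex count in the $n$-skeleton of $K$ needed to realise the non-trivial cup product on an $n$-simplex without making $\phi^{\cup n}$ a coboundary. Everything above the $n$-skeleton of $K$ is irrelevant, so the bound depends only on how many vertices are required to build a complex whose $n$-skeleton already supports the class $x^n$.
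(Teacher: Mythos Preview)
The paper does not actually prove this proposition; it is quoted verbatim as \cite[Theorem 2.2]{GMP} and used as a black box. So there is no ``paper's own proof'' to compare against here.

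As for your sketch, the crucial content lies entirely in the inductive step---that raising the height of $x$ by one forces at least $n+1$ additional vertices---and this is precisely the step you do not justify. You assert it, observe that the telescoping sum $\sum_{i=1}^{n+1} i$ gives the right answer, and then defer to \cite{GMP} for the ``formal mechanism.'' But a triangulation of $X$ with $x^n\neq 0$ does not in any obvious way contain a sub-triangulation of some space with $x^{n-1}\neq 0$ and $x^n=0$ on $n+1$ fewer vertices, so the induction as you phrase it has no evident carrier. The remarks about a simplicial $1$-cocycle, the open-star cover, and a classifying map to $K(R,1)$ are all reasonable ingredients, but none of them by itself produces the increment of $n+1$ vertices, and you do not explain how they combine to do so. In short, your proposal identifies the target and some of the ambient ideas but does not supply the argument; it remains a restatement of the theorem together with a pointer to the same reference the paper already cites.
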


\begin{proposition} (\cite[Theorem 3.5]{GMP})
\label{prop:ct estimate}
If $x_1\cdot x_2\cdot\ldots\cdot x_n\ne 0$ for some (homogeneous and not all of same dimension) elements
of $\widetilde H^*(X)$, then 
$$\Delta(X)\ge |x_1|+2|x_2|+\ldots +n|x_n|+(n+2)$$
(where $|x_i|$ denotes the dimension of $x_i$).
\end{proposition}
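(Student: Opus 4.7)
Since this is stated in the paper as \cite[Theorem 3.5]{GMP}, my plan is to sketch the line of argument I would use to prove it. First I would order the classes so that $|x_1| \le |x_2| \le \ldots \le |x_n|$ and fix a triangulation $K$ of $X$ on $v = \Delta(X)$ vertices. The overarching idea is to refine the classical cup-length argument, where both the dimensions of the classes and the combinatorial structure of the triangulation are brought to bear to squeeze out the desired lower bound.

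The cohomological half of the argument runs as follows. Suppose one had an open cover $X = U_1 \cup \ldots \cup U_n$ with $x_i|_{U_i} = 0$ for each $i$. Each $x_i$ would then lift to a relative class $\widetilde x_i \in H^{|x_i|}(X, U_i)$, and the relative cup product $\widetilde x_1 \smile \ldots \smile \widetilde x_n$ would lie in $H^{\sum_i |x_i|}(X, U_1 \cup \ldots \cup U_n) = H^*(X, X) = 0$. Since this class restricts to $x_1 \cdots x_n \in H^*(X)$, the hypothesis $x_1 \cdots x_n \ne 0$ forces that no such cover exists.

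The combinatorial half connects this cup-product obstruction to the triangulation via the Nerve Lemma. For any $V' \subset V(K)$, the union of open stars $U(V') := \bigcup_{v \in V'} \mathrm{st}(v)$ is homotopy equivalent to the full subcomplex of $K$ spanned by $V'$, whose dimension is at most $|V'| - 1$. Hence $H^j(U(V')) = 0$ whenever $j \ge |V'|$, and $x_i$ automatically vanishes on $U(V')$ as soon as $|V'| \le |x_i|$. If one could partition the vertex set of $K$ into groups $V_1, \ldots, V_n$ with $|V_i| \le |x_i|$, the preceding argument would give a contradiction. This naive counting, however, yields only the much weaker $v \ge \sum_i |x_i| + 1$.

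The hard part, and the actual content of \cite[Theorem 3.5]{GMP}, is extracting the coefficient $i$ in front of $|x_i|$ together with the additive term $n+2$. My plan is to run the partition argument iteratively: at the $i$-th step one passes to the subcomplex on which $x_1, \ldots, x_{i-1}$ have already been ``used up'', exploiting that the residual product $x_i \cdots x_n$ remains non-trivial on it. Each step must be charged enough additional vertices to control the dimension of the new piece of the cover against $|x_i|$, and since the successive pieces are forced to be simplex-disjoint from the previous ones, an extra vertex cost proportional to $i$ per unit of $|x_i|$ accumulates. The hypothesis ``not all of same dimension'' enters here to keep the iteration non-degenerate, and careful bookkeeping yields the bound $v \ge |x_1| + 2|x_2| + \ldots + n|x_n| + (n+2)$.
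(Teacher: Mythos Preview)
The paper does not prove this proposition; it is quoted as \cite[Theorem 3.5]{GMP} and used as a black box throughout Section~3. There is no in-paper argument to compare your sketch against.

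Regarding your sketch on its own merits: the cohomological and combinatorial set-up is correct and standard. The relative cup-product obstruction is the usual Lusternik--Schnirelmann mechanism, and the fact that $\bigcup_{v\in V'}\mathrm{st}(v)$ deformation retracts onto the full subcomplex spanned by $V'$ (hence has vanishing cohomology above degree $|V'|-1$) is well known. These two ingredients correctly yield the naive bound $\Delta(X)\ge\sum_i|x_i|+1$.

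However, your final paragraph --- which you yourself flag as the crux --- is not a proof. The phrases ``simplex-disjoint from the previous ones'' and ``an extra vertex cost proportional to $i$ per unit of $|x_i|$ accumulates'' gesture at a mechanism without supplying one. Nothing in your construction forces the successive pieces $V_i$ to be simplex-disjoint, and if you impose that condition by hand you lose the ability to cover $X$. Nor do you specify what subcomplex one ``passes to'' at step $i$, why the tail product $x_i\cdots x_n$ survives on it, or how the role of the hypothesis ``not all of the same dimension'' enters quantitatively. You have correctly located where the work lies, but the inductive bookkeeping that upgrades $\sum|x_i|$ to $\sum i\,|x_i|+(n+2)$ is precisely the content of the cited theorem, and your outline does not reproduce it. To close the gap you would need to consult \cite{GMP} directly.
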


Let us begin with the generic case.

\begin{theorem}
\label{thm:height estimate}
Assume $k\ge 4$, and let $2^s< n\le 2^{s+1}$ for some $s$. Then
$$\Delta(G_k(\RR^n))\ge 2^s(2^{s+1}+1).$$
As a consequence, $\Delta(G_k(\RR^n))$ increases (at least) as a quadratic function of $n$.
\end{theorem}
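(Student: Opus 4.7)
The plan is to combine the height computation of $w_1$ from Proposition \ref{prop:height w1} with the vertex lower bound from Proposition \ref{prop:height estimate}, since these are precisely designed to fit together: the former says $w_1$ has large height, and the latter translates a large height of a one-dimensional class into a large number of vertices.

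First I would check that we are in the generic (non-exceptional) branch of Proposition \ref{prop:height w1}. The two exceptional cases there are $k=2$, and $k=3$ with $n=2^s+1$; since the hypothesis here is $k\ge 4$, neither applies. Therefore Proposition \ref{prop:height w1} yields
\[
\mathrm{height}(w_1) = 2^{s+1}-1,
\]
so in $H^*(G_k(\RR^n);\ZZ_2)$ the class $w_1^{\,2^{s+1}-1}$ is nonzero, while $w_1\in H^1(G_k(\RR^n);\ZZ_2)$.

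Next I would feed this into Proposition \ref{prop:height estimate} with the integer $m=2^{s+1}-1$ playing the role of the exponent and with $\ZZ_2$-coefficients. The proposition immediately gives
\[
\Delta(G_k(\RR^n)) \;\ge\; \frac{(m+1)(m+2)}{2} \;=\; \frac{2^{s+1}\,(2^{s+1}+1)}{2} \;=\; 2^s\bigl(2^{s+1}+1\bigr),
\]
which is the claimed bound.

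For the asymptotic statement, since $2^s<n\le 2^{s+1}$ we have $2^s\ge n/2$ and $2^{s+1}\ge n$, hence
\[
\Delta(G_k(\RR^n)) \;\ge\; 2^s(2^{s+1}+1) \;\ge\; \tfrac{n}{2}(n+1) \;=\; \tfrac{n^2+n}{2},
\]
so the lower bound grows at least quadratically in $n$. There is no real obstacle here: the entire argument is an almost mechanical combination of two previously stated results, and the only thing to watch is the case distinction in Proposition \ref{prop:height w1}, which the hypothesis $k\ge 4$ cleanly excludes.
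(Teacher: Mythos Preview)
Your proof is correct and follows essentially the same approach as the paper: apply Proposition~\ref{prop:height w1} (noting that $k\ge 4$ avoids the exceptional cases) to get $w_1^{2^{s+1}-1}\ne 0$, then feed this into Proposition~\ref{prop:height estimate}. The asymptotic argument is also the same up to cosmetic differences; the paper derives the identical lower bound $\frac{n(n+1)}{2}$ from $n\le 2^{s+1}\le 2n-2$.
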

\begin{proof}
By Proposition  \ref{prop:height w1} we have $w_1^{2^{s+1}-1}\ne 0$, and then by
Proposition \ref{prop:height estimate} 
$$\Delta(G_k(\RR^n))\ge \frac{2^{s+1}(2^{s+1}+1)}{2}=2^s(2^{s+1}+1).$$
For the second statement, observe that $n\le 2^{s+1}\le 2n-2$, therefore 
$$\frac{n(n+1)}{2}\le 2^s(2^{s+1}+1)\le (n-1)(2n-1).$$
\end{proof}

For example, by the above theorem $\Delta(G_3(\RR^8))\ge 36$, so the quadratic growth does not automatically
imply that the resulting simplicial complex must be very big. However, by applying the full strength of 
Stong's results we will be able to obtain estimates that 
are still quadratic in $n$ but that give much bigger lower bounds for the number of vertices.

The simplest case is given by 2-dimensional subspaces of $\RR^n$. It follows from  the description of 
$H^*(G_2(\RR^n);\ZZ_2)$ and from Proposition \ref{prop:height w1} that for $2^s<n\le 2^{s+1}$ 
the height of $w_1$ is $2^{s+1}-2$ and that the top-dimensional cohomology class in 
$H^{2(n-2)}(G_2(\RR^n);\ZZ_2)$ is 
$$w_1^{2^{s+1}-2}\cdot w_2^{n-1-2^s}\ne 0.$$
By performing the summation from Proposition \ref{prop:ct estimate} we obtain the following result.

\begin{theorem}
\label{thm:vertices gkn}
Let $2^s< n\le 2^{s+1}$. Then 
$$\Delta(G_2(\RR^n))\ge (n-2)^2+2^s(2n-2^s-1).$$
In particular, if $n=2^s$, then $\Delta(G_2(\RR^n))\ge  \frac{7}{4}n^2-\frac{9}{2}n+4$, and if 
$n=2^s+1$, then $\Delta(G_2(\RR^n))\ge  2n^2-5n+4$.
\end{theorem}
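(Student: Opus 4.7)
The plan is to feed the top-dimensional product $w_1^a w_2^b$, with $a := 2^{s+1} - 2$ and $b := n - 1 - 2^s$, into Proposition \ref{prop:ct estimate}. The preliminary step is to confirm this monomial lies in top cohomology and is non-zero: a dimension check gives $|w_1^a w_2^b| = a + 2b = 2(n-2) = \dim G_2(\RR^n)$, and non-triviality is precisely the content of the remark preceding the theorem (which combines Proposition \ref{prop:height w1} with the explicit ring description of $H^*(G_2(\RR^n);\ZZ_2)$).

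Next I would decompose $w_1^a w_2^b$ into $N := a+b$ individual factors, placing the $a$ copies of $w_1$ (degree $1$) in positions $1, \ldots, a$ and the $b$ copies of $w_2$ (degree $2$) in positions $a+1, \ldots, N$. This order is optimal because the weights $1, 2, \ldots, N$ appearing in Proposition \ref{prop:ct estimate} are increasing, so the higher-degree factors should sit at the larger positions. Applying the proposition gives
\[
\Delta(G_2(\RR^n)) \;\ge\; \sum_{i=1}^{a} i \,+\, 2\!\sum_{i=a+1}^{a+b} i \,+\, (N+2) \;=\; \frac{a(a+1)}{2} \,+\, (2a+b+1)\,b \,+\, (a+b+2).
\]
The rest is routine: substituting $a = 2^{s+1} - 2$ and $b = n - 1 - 2^s$, the $4^s$ terms cancel against cross contributions of the form $(n-1)\cdot 2^{s+1}$, and what remains collapses to $(n-2)^2 + 2^s(2n - 2^s - 1)$.

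The two ``in particular'' statements are pure substitution. For $n = 2^s$ one invokes the general bound with $s$ replaced by $s-1$ so that $n$ lies in the correct range $2^{s-1} < n \le 2^s$; plugging $2^{s-1} = n/2$ then yields $\tfrac{7}{4} n^2 - \tfrac{9}{2} n + 4$. For $n = 2^s + 1$, setting $2^s = n - 1$ directly gives $2n^2 - 5n + 4$. The step I anticipate being the real obstacle is this last case: here $b = 0$, the product degenerates into $a$ copies of $w_1$ alone, and Proposition \ref{prop:ct estimate} is not literally applicable because its hypothesis that the factors do not all share a common dimension fails. One must instead fall back on Proposition \ref{prop:height estimate} applied to $w_1^a$, yielding $\tfrac{(a+1)(a+2)}{2}$, and then verify by direct computation that this is consistent with the claimed formula.
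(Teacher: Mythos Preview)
Your approach coincides with the paper's: the text simply says ``by performing the summation from Proposition~\ref{prop:ct estimate}'' applied to $w_1^{2^{s+1}-2}w_2^{\,n-1-2^s}$, and your ordering of factors and evaluation of the sum is exactly this computation, carried out correctly.

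Your observation about the degenerate case $n=2^s+1$ (so $b=0$) is sharp and in fact goes beyond what the paper says; the paper does not isolate this issue. However, your proposed resolution does not actually close the gap. With $a=2n-4$, Proposition~\ref{prop:height estimate} gives
\[
\Delta(G_2(\RR^n))\ge \frac{(a+1)(a+2)}{2}=\frac{(2n-3)(2n-2)}{2}=2n^2-5n+3,
\]
which is one \emph{less} than the asserted $2n^2-5n+4$. Equivalently, the general expression $\frac{a(a+1)}{2}+b(2a+b+1)+(a+b+2)$ specialised to $b=0$ equals $\frac{(a+1)(a+2)}{2}+1$, not $\frac{(a+1)(a+2)}{2}$. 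So your final sentence, ``verify by direct computation that this is consistent with the claimed formula,'' would fail if you actually carried it out. Either the stated bound in the $n=2^s+1$ case should be $2n^2-5n+3$, or one needs to argue that Proposition~\ref{prop:ct estimate} (as proved in \cite{GMP}) still yields the stronger inequality even when all factors share the same dimension; the parenthetical hypothesis in the paper's statement of Proposition~\ref{prop:ct estimate} suggests otherwise.
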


Here we clearly see that as a function of $n$ the estimates that we derive from Stong's results are 
comparatively better when $n$ is slightly bigger than a power of 2. The obvious cause is that the 
height of $w_1$ almost doubles when $n$ passes the power of two (cf. Proposition \ref{prop:height w1}). 
The computation for Grassmann 
manifolds of 3- and 4- dimensional subspaces of $\RR^n$ follows the same lines: one uses Stong's 
results to determine the longest non-trivial products in cohomology and then applies Proposition
\ref{prop:ct estimate} to compute the lower bound for the number of vertices in a triangulation. 
However, the study of all sub-cases would give complicated and irregular results.  To avoid this 
difficulty, we will compute only the limiting cases, when  $n$ is a power of two,
and when $n$ is by one bigger than a power of two. 

\begin{theorem}\ \\[-6mm]
\label{thm:vertices g3n}
\begin{enumerate}
\item If $n=2^s$, then $\Delta(G_3(\RR^n))\ge 3n(n-5)+\frac{n^2-n}{2}+17$.
\item If $n=2^s+1$, then $\Delta(G_3(\RR^n))\ge 4n(n-5)+\frac{(n-1)^2}{4}+25$.
\end{enumerate}
\end{theorem}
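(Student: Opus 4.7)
The plan is to combine the non-trivial top-degree products in $H^*(G_3(\RR^n);\ZZ_2)$ supplied by Proposition \ref{prop:products in g3n} with the cup-length style vertex bound of Proposition \ref{prop:ct estimate}. In each of the two cases the required product is already listed in Stong's results, so the remaining work is a short arithmetic calculation.

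For part (1), $n=2^s$, I would re-index by setting $s'=s-1$, so that $2^{s'}<n\le 2^{s'+1}$ with $n=2^{s'+1}$. The third sub-case of Proposition \ref{prop:products in g3n} then applies and yields $w_1^{n-1}\cdot w_2^{n-4}\ne 0$. For part (2), $n=2^s+1$, the first sub-case of Proposition \ref{prop:products in g3n} applies with $p=s$ (since $2^{s+1}-2^s+1=n$), giving
$$w_1^{2^{s+1}-2}\cdot w_2^{2^{s+1}-3\cdot 2^{s-1}-2}\;=\;w_1^{2n-4}\cdot w_2^{(n-5)/2}\ne 0.$$
One checks that in both cases the total degree equals $3(n-3)$, the top dimension of $G_3(\RR^n)$, as expected.

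Next I would feed these products into Proposition \ref{prop:ct estimate}. Since $w_1$ and $w_2$ have different cohomological dimensions, the hypothesis is satisfied. The sum $|x_1|+2|x_2|+\cdots+m|x_m|+(m+2)$ is maximized when the factors are listed in order of \emph{increasing} dimension — the rearrangement inequality pairs the larger coefficient $i$ with the larger $|x_i|$ — so I would list all $a$ copies of $w_1$ first and the $b$ copies of $w_2$ last. This produces the bound
$$\sum_{i=1}^{a}i\;+\;2\sum_{i=a+1}^{a+b}i\;+\;(a+b+2).$$
Substituting $(a,b)=(n-1,n-4)$ in part (1), and $(a,b)=(2n-4,(n-5)/2)$ in part (2), and simplifying, yields exactly the two claimed expressions $3n(n-5)+\tfrac{n^2-n}{2}+17$ and $4n(n-5)+\tfrac{(n-1)^2}{4}+25$ respectively.

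The main obstacle is not conceptual but bookkeeping: one has to identify correctly the parameters $s$ and $p$ in Stong's parameterization for the two special values of $n$, and check that the resulting exponents are non-negative integers (in particular, that $(n-5)/2\in\ZZ$ in part (2), which follows from $n=2^s+1$ being odd for $s\ge 1$). Once those identifications are in place, the algebra is entirely routine, and the two bounds drop out of the formula above.
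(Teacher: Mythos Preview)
Your proposal is correct and follows essentially the same route as the paper: identify the relevant non-trivial product from Proposition~\ref{prop:products in g3n} (namely $w_1^{n-1}w_2^{n-4}$ for $n=2^s$ and $w_1^{2n-4}w_2^{(n-5)/2}$ for $n=2^s+1$), order the factors by increasing degree, and evaluate the sum from Proposition~\ref{prop:ct estimate}. Your explicit invocation of the rearrangement inequality to justify the ordering is a small addition the paper leaves implicit, but otherwise the arguments coincide.
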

\begin{proof}
If $n=2^s$, then Proposition \ref{prop:products in g3n} implies that the longest non-trivial product in
$H^*(G_3(\RR^n);\ZZ_2)$ is $w_1^{n-1}\cdot w_2^{n-4}\ne 0$. Then we have by Proposition \ref{prop:ct estimate}
$$\Delta(G_3(\RR^n))\ge ((1+\ldots+(n-1))+2(n+\ldots+(2n-5))+(2n+3)=\frac{7}{2}n^2-\frac{31}{2}n+17.$$
The last expression can be conveniently restated as in (1) to allow comparison with the estimate (2).

If $n=2^s+1$, then Proposition \ref{prop:products in g3n} gives $w_1^{2n-4}\cdot w_2^{\frac{n-5}{2}}\ne 0$
as the longest non-trivial product. The summation as in (1) yields the estimate 
$$\Delta(G_3(\RR^n))\ge \frac{17}{4}n^2-\frac{41}{2}n+\frac{101}{4}=4n(n-5)+\frac{(n-1)^2}{4}+25.$$
\end{proof}

\begin{example}
\label{ex:g3n}
By the above Theorem we have 
$$\Delta(G_3(\RR^8))\ge 117 \ \ \mathrm{and}\ \ \Delta(G_3(\RR^9))\ge 185.$$
At the next power of two the jump in the estimate is much bigger:
$$\Delta(G_3(\RR^{16}))\ge 665 \ \ \mathrm{and}\ \ \Delta(G_3(\RR^{17}))\ge 905.$$
\end{example}

We conclude this part with the estimates for the limit cases of $G_4(\RR^n)$.

\begin{theorem}\ \\[-6mm]
\label{thm:vertices g4n}
\begin{enumerate}
\item If $n=2^s$, then $\Delta(G_4(\RR^n))\ge 6n(n-6)-\frac{3}{8}(n^2+2n)+57$.
\item If $n=2^s+1$, then $\Delta(G_4(\RR^n))\ge 7n(n-7)+3n+89$.
\end{enumerate}
\end{theorem}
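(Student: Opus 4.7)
The plan is to apply the same two-step recipe that produced Theorem~\ref{thm:vertices g3n}: use Proposition~\ref{prop:products in g4n} to exhibit a long non-trivial product of Stiefel--Whitney classes in $H^*(G_4(\RR^n);\ZZ_2)$, then feed that product into Proposition~\ref{prop:ct estimate}.

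For case (1), with $n = 2^s$, I view $n$ as lying at the top of the range $2^{s-1} < n \le 2^s$ and invoke Proposition~\ref{prop:products in g4n} with its parameter shifted down by one. Writing $n = 2^{s-1} + 2^{s-2} + 1 + (2^{s-2} - 1)$, which corresponds to $r = s-2$ and $t = 2^{s-2} - 1$, the second (longer) product of the proposition becomes the non-trivial top-dimensional class
$$w_1^{\,n-1} \cdot w_2^{\,n-7} \cdot w_3 \cdot w_4^{\,n/4 - 1} \ne 0,$$
which has $N = \tfrac{9n}{4} - 8$ factors. Ordering them so that the dimensions are non-decreasing and invoking Proposition~\ref{prop:ct estimate}, the lower bound becomes
$$\sum_{i=1}^{n-1} i \;+\; 2\sum_{i=n}^{2n-8} i \;+\; 3(2n - 7) \;+\; 4\sum_{i=2n-6}^{9n/4 - 8} i \;+\; (N+2),$$
which I would then collect into the quadratic polynomial asserted in the statement.

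Case (2), with $n = 2^s + 1$, proceeds analogously: Proposition~\ref{prop:products in g4n} directly supplies $w_1^{\,2n-3} \cdot w_2^{\,n-8} \cdot w_3 \ne 0$ (for $s \ge 3$, i.e.\ $n \ge 9$), with $N = 3n - 10$ factors, and the corresponding sum of three arithmetic progressions plus $N+2$ again reduces to the stated quadratic. The only genuine obstacle is bookkeeping: correctly matching Stong's indexing parameters $(s,r,t)$ to the Grassmannian index $n$ (particularly the $s \leftrightarrow s-1$ shift needed for case (1)), choosing among the several non-trivial products of Proposition~\ref{prop:products in g4n} the one maximising the weighted sum from Proposition~\ref{prop:ct estimate}, and then patiently evaluating the arithmetic progressions. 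No conceptual difficulty appears beyond the recipe already established for Theorem~\ref{thm:vertices g3n}.
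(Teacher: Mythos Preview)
Your approach is correct and matches the paper's proof exactly: the paper identifies precisely the same non-trivial products $w_1^{n-1}\cdot w_2^{n-7}\cdot w_3\cdot w_4^{n/4-1}$ for $n=2^s$ and $w_1^{2n-3}\cdot w_2^{n-8}\cdot w_3$ for $n=2^s+1$, and then appeals to Proposition~\ref{prop:ct estimate} without spelling out the arithmetic. Your explicit unpacking of the index shift $s\leftrightarrow s-1$ and the decomposition $2^s = 2^{s-1}+2^{s-2}+1+(2^{s-2}-1)$ is the right way to extract the first product from Proposition~\ref{prop:products in g4n}, and your setup of the weighted sum is accurate.
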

\begin{proof}
By Proposition \ref{prop:products in g4n} the cohomology products that give the highest estimates 
are $w_1^{n-1}\cdot w_2^{n-7}\cdot w_3\cdot w_4^{\frac{n}{4}-1}\ne 0$ for $n=2^s$, and 
$w_1^{2n-3}\cdot w_2^{n-8}\cdot w_3\ne 0$ for $n=2^s+1$. The application of Proposition 
\ref{prop:ct estimate} then gives the stated results.
\end{proof}

\begin{example}
\label{ex:g4n}
We have the following estimates:
$$\Delta(G_4(\RR^8))\ge 123, \ \ \ \ \Delta(G_4(\RR^9))\ge 242$$
and 
$$\Delta(G_4(\RR^{16}))\ge 909, \ \ \ \ \Delta(G_4(\RR^{17}))\ge 1330.$$
Observe that the increase of the estimate from $n=16$ to $n=17$ covers almost two-thirds of the increase 
from $n=9$ to $n=16$. However, bear in mind that these values are artefacts of the method and do not
reflect necessarily the actual increase of $\Delta(G_4(\RR^n))$.
\end{example}

We are now ready to estimate the number of facets and of all simplices that are needed to 
triangulate a Grassmann manifold. To this end we apply the Lower Bound Theorem, which we state 
following Kalai \cite{Kalai}. Note that LBT does not take into account the homology of 
the manifold. We will be able to obtain better estimates by using a generalized version of 
LBT which is however proved only for orientable manifolds (in our case for $\gkn$ with
$n$ even). 

\begin{theorem}
\label{thm:LBT}
Let $K$ be a triangulation of a $d$-dimensional closed manifold, and denote by $f_i$, $i=0,\ldots,d$ 
the number of $i$-dimensional simplices in $K$. Then
$$f_i\ge f_0\cdot{d+1 \choose i}-i\cdot{d+2\choose i+1}\ \ \textrm{for\ }i=0,\ldots,d-1$$
and
$$f_d\ge f_0\cdot d- (d+2)(d-1).$$
Moreover, by adding up all inequalities we obtain an estimate for the total number of simplices in $K$:
$$(f_0+\ldots+f_d)\ge 2[(f_0-d)(2^{d+1}-1)+1].$$
\end{theorem}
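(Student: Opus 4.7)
The first two inequalities are the classical Lower Bound Theorem for simplicial closed (pseudo)manifolds, established via the generic rigidity of the $1$-skeleton by Kalai \cite{Kalai}, building on Gromov's earlier bounds \cite{Gromov}. These results are imported directly, so the only substantive work to be done here is to deduce the total-simplex estimate from them.

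The plan for the third inequality is a direct summation of the estimates in parts (i) and (ii). Sum $f_i \ge f_0\binom{d+1}{i} - i\binom{d+2}{i+1}$ over $i = 0, 1, \ldots, d-1$ (the case $i=0$ being the tautology $f_0 \ge f_0$), and add to it the separate estimate $f_d \ge f_0 d - (d+2)(d-1)$. This produces
\[
\sum_{i=0}^{d} f_i \;\ge\; f_0\!\left[\,\sum_{i=0}^{d-1}\binom{d+1}{i}+d\,\right] - \left[\,\sum_{i=0}^{d-1} i\binom{d+2}{i+1}+(d+2)(d-1)\,\right].
\]
The remainder is bookkeeping with binomial coefficients. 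The first bracket is evaluated from $\sum_{i=0}^{d+1}\binom{d+1}{i}=2^{d+1}$ by removing the two end terms $\binom{d+1}{d}$ and $\binom{d+1}{d+1}$. For the second bracket, the substitution $j=i+1$ turns $\sum_{i=0}^{d-1} i\binom{d+2}{i+1}$ into a linear combination of $\sum j\binom{d+2}{j}$ and $\sum \binom{d+2}{j}$, both of which collapse to explicit powers of $2$ via the standard identity $\sum_j j\binom{n}{j} = n\cdot 2^{n-1}$. Collecting the coefficients of $f_0$ and the constants yields the stated factorised form.

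The main obstacle is not in the summation, which is purely algebraic, but in parts (i) and (ii): these rest on the non-trivial rigidity-theoretic proof of the LBT, together with the handshake relation $(d+1)f_d = 2f_{d-1}$ characteristic of closed manifolds that is needed to pass from the bound on $f_{d-1}$ to the bound on $f_d$. Since those results are quoted from \cite{Gromov,Kalai}, the actual work reduces to the elementary combinatorial computation just sketched.
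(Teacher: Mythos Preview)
Your approach is exactly what the paper does: it cites parts (i) and (ii) from Kalai and then says only ``by adding up all inequalities we obtain'' the third estimate. So the strategy is correct and aligned with the paper.

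However, your final sentence ``Collecting the coefficients of $f_0$ and the constants yields the stated factorised form'' is not true, because the stated formula in the paper contains an arithmetic slip. If you actually carry out the computation you sketched, you get
\[
\sum_{i=0}^{d-1}\binom{d+1}{i}+d \;=\; 2^{d+1}-(d+2)+d \;=\; 2^{d+1}-2
\]
for the coefficient of $f_0$, and
\[
\sum_{i=0}^{d-1} i\binom{d+2}{i+1}+(d+2)(d-1) \;=\; d\cdot 2^{d+1}-2d-2
\]
for the constant term, so that
\[
\sum_{i=0}^{d} f_i \;\ge\; (2^{d+1}-2)f_0 - d\cdot 2^{d+1}+2d+2 \;=\; 2\bigl[(f_0-d)(2^{\,d}-1)+1\bigr],
\]
with $2^d-1$ rather than $2^{d+1}-1$. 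A sanity check confirms this: for the boundary of the $(d{+}1)$-simplex one has $f_0=d+2$ and total face count $2^{d+2}-2$, which equals $2[(2)(2^d-1)+1]$ but is strictly smaller than $2[(2)(2^{d+1}-1)+1]$. The paper's numerical example for $G_3(\RR^9)$ uses the misprinted formula, so its ``$175\cdot 10^6$'' is roughly twice the bound the summation actually gives. Your proof plan is fine; just don't assert that the algebra reproduces the printed exponent without checking --- it doesn't.
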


Observe that the number of all simplices always increases exponentially with the dimension.
In fact, even the minimal triangulation of the simplest closed manifold, the $d$-dimensional
sphere, has $d+2$ vertices and $2^{d+2}-2$ simplices. 
However, we will show that the number of simplices that are needed to triangulate a
Grassmannian of comparable dimension is several orders of magnitude bigger. 

We will state as a Theorem only the estimate for the generic case that follows from the computation 
of $\textrm{height}(w_1)$, and is valid for all Grassmann manifolds. Better estimates for small $k$ will 
be relegated to  examples. It turns out to be more convenient to write $G_k(\RR^{n+k})$ because 
its dimension is expressed as $kn$. 

\begin{theorem}
\label{thm:facets}
Every triangulation of the Grassmann manifold $G_k(\RR^{n+k})$ must have at least
$$\frac{k}{2}\cdot(n^3+n^2)+\frac{k(k+2)(k-1)}{2}\cdot n+2$$
facets. Furthermore, every triangulation of $G_k(\RR^{n+k})$ must have at least
$$[(n+k)(n+k+1)-2kn]\cdot(2^{kn+1}-1)$$
simplices.
\end{theorem}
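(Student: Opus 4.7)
The plan is to use the vertex lower bound coming from the height of $w_1$ as input to the Lower Bound Theorem, then expand and simplify.

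First I would establish
\[
\Delta(G_k(\RR^{n+k})) \geq \frac{(n+k)(n+k+1)}{2}.
\]
For $k \geq 4$ this is immediate from Theorem~\ref{thm:height estimate} applied with the Grassmannian parameter $n+k$ in place of $n$: the theorem yields $\Delta \geq 2^s(2^{s+1}+1)$ where $2^s < n+k \leq 2^{s+1}$, and the inequality $(n+k)(n+k+1) \leq 2^{s+1}(2^{s+1}+1)$ finishes it after dividing by two. For $k = 2, 3$, Proposition~\ref{prop:height w1} still guarantees that the height of $w_1$ is at least $2^{s+1}-2$, and a brief case comparison (singling out the exceptional subcase $n+k = 2^s+1$) combined with Proposition~\ref{prop:height estimate} delivers the same quadratic lower bound; alternatively the sharper Theorems~\ref{thm:vertices gkn} and~\ref{thm:vertices g3n} can be invoked in the low-$k$ cases.

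Next I would set $d = kn = \dim G_k(\RR^{n+k})$ and substitute $f_0 \geq \frac{(n+k)(n+k+1)}{2}$ into Theorem~\ref{thm:LBT}. The facet inequality becomes
\[
f_d \geq \frac{(n+k)(n+k+1)}{2} \cdot kn - (kn+2)(kn-1),
\]
and a direct expansion, using the identity $k^2 + k - 2 = (k+2)(k-1)$, collapses the right-hand side to exactly $\frac{k}{2}(n^3+n^2) + \frac{k(k+2)(k-1)}{2}\,n + 2$. For the total count, the same substitution into $\sum_{i=0}^d f_i \geq 2\bigl[(f_0 - d)(2^{d+1}-1) + 1\bigr]$ yields
\[
\sum_{i=0}^{kn} f_i \geq \bigl[(n+k)(n+k+1) - 2kn\bigr](2^{kn+1}-1) + 2,
\]
which immediately implies the stated estimate (the constant $+2$ is simply absorbed).

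The only non-mechanical step is the vertex bound in the first paragraph. For $k \geq 4$ Theorem~\ref{thm:height estimate} delivers it outright, so the main obstacle is the minor case analysis needed for $k = 2, 3$, where one must verify that even when the height of $w_1$ drops to $2^{s+1}-2$ the resulting estimate from Proposition~\ref{prop:height estimate} continues to dominate $\frac{(n+k)(n+k+1)}{2}$. This is a short arithmetic check. Once the vertex bound is secured the rest of the proof is purely routine polynomial expansion.
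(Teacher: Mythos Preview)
Your proposal is correct and follows essentially the same approach as the paper: establish the vertex bound $\Delta(G_k(\RR^{n+k}))\ge\frac{(n+k)(n+k+1)}{2}$ via Theorem~\ref{thm:height estimate} for $k\ge 4$ and via the sharper estimates of Theorems~\ref{thm:vertices gkn} and~\ref{thm:vertices g3n} for $k=2,3$, then feed this into the Lower Bound Theorem (Theorem~\ref{thm:LBT}). The paper's own proof is terser and leaves the polynomial expansion implicit, but the logic is identical.
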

\begin{proof}
We have proved in Theorem \ref{thm:height estimate} that every triangulation of $G_k(\RR^{n+k})$
has at least $\frac{(n+k)(n+k+1)}{2}$ vertices. Note that for $k<4$ we obtained even better estimates
so the above formula is actually valid for all $k$. The statement follows by Theorem \ref{thm:LBT}.
\end{proof}

To corroborate our initial claim about the size of triangulations of Grassmann manifolds we will
compute the lower bounds for some small Grassmannians.

\begin{example}
$G_3(\RR^9)$ is 18-dimensional and by Example \ref{ex:g3n} every triangulation requires at least 
185 vertices. As a consequence, every triangulation of $G_3(\RR^9)$ must have at least
$$185\cdot 18-(18+2)\cdot(18-1)=2990$$
facets and at least 
$$2((185-18)\cdot(2^{19}-1)+1)> 175\cdot 10^6$$
simplices.

$G_4(\RR^9)$ is 20-dimensional and $\Delta(G_4(\RR^9))\ge 242$ by Example \ref{ex:g4n}. 
Therefore, every triangulation of $G_4(\RR^9)$ requires more than 4422 facets and more than 
$930\cdot 10^6$ simplices. The number of 4-dimensional simplices, whose links should be examined
to compute the first rational Pontrjagin class by means of Gaifullin's formula exceeds 1.3 million. 
\end{example}

Rounding a bit, we can say that one needs more than a billion simplices to build
a manifold that carries only the first four Stiefel-Whitney classes (and two relations among them).

\section{Improving the bounds using the Manifold $g$-Theorem}

Note that LBT as stated above does not take into account the homology of the manifold, so we are led to consider stronger results. Much of the research in enumerative combinatorics of simplicial complexes has been guided by various versions of the so called $g$-Conjecture. These are a far-reaching generalization of the LBT. Of particular interest to us is the so called Manifold $g$-Conjecture (see \cite[Section 4]{Klee-Novik}), as it comes with a version of LBT for manifolds that takes into account the Betti numbers. Recently, Adiprasito has published a preprint \cite{Adiprasito}, whose results combined with the work of Novik and Swartz \cite{Novik,Novik-Swartz-II,Novik-Swartz-III,Swartz} imply that the Manifold $g$-Conjecture is indeed true.

Before stating the Manifold $g$-Theorem, we need some background. The following definition is essentially due to McMullen and Walkup \cite{McMullen-Walkup}.

\begin{definition}
Let $K$ be a $d$-dimensional simplicial complex and $(f_0,f_1,\ldots,f_d)$ its face vector, with $f_i$ for $i=0,\ldots,d$ as defined in the previous section. Additionally, define $f_{-1}=1$ and $f_i=0$ for $i>d$ and $i<-1$. Then, the \emph{$h$-vector} associated to $K$ is the vector $(h_0,h_1,\ldots,h_{d+1})$, where we define:
\[
h_j=\sum_{i=0}^j(-1)^{j-i}\binom{d+1-i}{d+1-j}f_{i-1},\qquad j\in\ZZ.
\]
Note that $h_j=0$ for $j<0$ and $j>d+1$. The $f$-vector can be recovered by inverting the equations\footnote{This can be done e.g. by using generating functions.}:
\[
f_{i-1}=\sum_{j=0}^i\binom{d+1-j}{d+1-i}h_j,\qquad i\in\ZZ.
\]
Similarly, the \emph{$g$-vector} associated to $K$ is the vector $(g_0,g_1,\ldots,g_{\lfloor\frac{d+1}2\rfloor})$ where we define:
\[
g_j=\sum_{i=0}^j(-1)^{j-i}\binom{d+2-i}{d+2-j}f_{i-1},\qquad j\in\ZZ.
\]
Note that $g_j=0$ for $j<0$ and $j>d+2$. Also, one may verify by explicit calculation that $g_j=h_j-h_{j-1}$ for all $j\in\ZZ$ and recover the $f$-vector by inverting:
\[
f_{i-1}=\sum_{j=0}^i\binom{d+2-j}{d+2-i}g_j,\qquad i\in\ZZ.
\]
\end{definition}

The original reason for the introduction of $h$-vectors in \cite{McMullen-Walkup} was to formulate the Generalized Lower Bound Conjecture (now Theorem \cite{Stanley,Murai-Nevo} and thus abbreviated as GLBT) for simplicial polytopes, whose first part states that $h_0\leq\ldots\leq h_{\lfloor\frac{d+1}2\rfloor}$ and second part concerns the cases of equality. Furthermore, it is shown in \cite{McMullen-Walkup} that the first part of GLBT together with the Dehn-Sommerville relations $h_j=h_{d+1-j}$, $j\in\ZZ$, implies the LBT for simplicial polytopes (the latter was proved in \cite{Barnette1,Barnette2}).

There is a huge body of work generalizing these ideas beyond the case of simplicial polytopes to the case of simplicial spheres and simplicial manifolds (see \cite{Klee-Novik} for a good survey of the field). The Manifold $g$-Theorem, which is of interest to us, is concerned with so called $h''$-vectors, which are a version of $h$-vectors more attuned to the case of manifolds, taking their reduced Betti numbers $\beta_i$, $i=0,\ldots,d$, into account. These are defined as follows \cite{Novik-Swartz-I}:
\[
h''_j=h_j-\binom{d+1}{j}\sum_{i=0}^j(-1)^{j-i}\beta_{i-1},\qquad 0\leq j\leq d,
\]
and\footnote{Note that this second equation, as stated in \cite{Klee-Novik}, seems to contain a typo.}
\[
h''_{d+1}=h_{d+1}-\sum_{i=0}^d(-1)^{d+1-i}\beta_{i-1}.
\]
As in the case of $h$-vectors, we extend to all integer indices by defining $h''_j=0$ for $j<0$ and $j>d+1$.

We can now state the theorem. In doing so, we closely follow \cite[Theorem 6]{Klee-Novik}; however all references to the WLP can be omitted from the statement of the Manifold $g$-Theorem, as thanks to \cite{Adiprasito} it is now known that all homology spheres have the WLP:

\begin{theorem}
\label{thm:h''-vectors}
Suppose $\FF$ is a field with infinitely many elements. Let $M$ be a connected $d$-dimensional orientable triangulated $\FF$-homology manifold without boundary. Suppose $\beta_i$, $i=0,\ldots,d$, are the reduced Betti numbers of $M$ with respect to $\FF$. Then the following conditions hold:
\begin{enumerate}
\item Dehn-Sommerville: $h''_j=h''_{d+1-j}$ for $j=0,\ldots,d+1$,
\item Nonnegativity: $h''_j\geq 0$ for $j=0,\ldots,d+1$,
\item The Manifold $g$-Theorem:
\begin{enumerate}
\item $h''_0\leq h''_1 \ldots\leq h''_{\lfloor\frac{d+1}2\rfloor}$,
\item $(g''_0,g''_1,g''_2,\ldots,g''_{\lfloor\frac{d+1}2\rfloor})$, where $g''_j:=h''_j-h''_{j-1}$, $j\in\ZZ$, is an M-sequence\footnote{See \cite[Section 6.1]{Klee-Novik} for the definition of M-sequence.}.
\end{enumerate}
\end{enumerate}
Finally, nonnegativity continues to hold without the orientability assumption.
\end{theorem}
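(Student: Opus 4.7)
The plan is to assemble the theorem from its constituent pieces by working inside the Stanley-Reisner (face) ring $\FF[M]$ of the triangulation. Since $\FF$ is infinite, one can choose a generic linear system of parameters $\theta_1,\ldots,\theta_{d+1}$ and form the graded Artinian $\FF$-algebra $A=\FF[M]/(\theta_1,\ldots,\theta_{d+1})$. The whole argument amounts to understanding the graded structure of a canonical quotient of $A$.

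First, I would identify the $h''$-vector as the Hilbert function of such a quotient. Following Novik-Swartz, there is a distinguished graded ideal $J\subset A$ built from the reduced local cohomology of $\FF[M]$, which in turn is controlled by the reduced Betti numbers of $M$, such that $\dim_\FF (A/J)_j=h''_j$ for every $j$. This interpretation is intrinsic to $M$ and does not need orientability, so it immediately yields part (2) (Hilbert functions are nonnegative) as well as the final sentence of the theorem. This step is essentially bookkeeping once the right socle correction has been pinned down.

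Next, for the orientable case I would invoke Poincar\'e duality to upgrade $A/J$ to a Gorenstein algebra with one-dimensional socle in degree $d+1$, which forces $h''_j=h''_{d+1-j}$ and gives the Dehn-Sommerville relations (1). For the Manifold $g$-Theorem, the crucial external input is Adiprasito's result that every $\FF$-homology sphere has the weak Lefschetz property over an infinite field. Because every vertex link in a homology manifold is a homology sphere, this local WLP, fed into the Novik-Swartz transfer machinery, promotes to a global statement: multiplication by a generic linear form $\omega\in (A/J)_1$ is injective from degree $j-1$ to degree $j$ whenever $j\le \lfloor (d+1)/2\rfloor$. Injectivity in each such degree is exactly the unimodality $h''_{j-1}\le h''_j$ of (3a). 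For (3b), one then observes that the Hilbert function of the further quotient $(A/J)/(\omega)$, being the Hilbert function of a standard graded Artinian $\FF$-algebra generated in degree one, is automatically an M-sequence by Macaulay's classical theorem, and coincides with $(g''_0,g''_1,\ldots,g''_{\lfloor (d+1)/2\rfloor})$ by construction.

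The main obstacle is unquestionably the appeal to Adiprasito's WLP for homology spheres; this is the deep new ingredient and the one step I would not expect to reproduce from scratch. The Novik-Swartz framework was designed precisely to reduce the Manifold $g$-Conjecture to WLP for spheres, so once that reduction is accepted the remaining tasks, namely identifying $h''$ as a Hilbert function, extracting Poincar\'e duality from orientability, and converting injectivity plus Macaulay's theorem into unimodality and the M-sequence property, are a careful but essentially standard synthesis of Gorenstein algebra and Hilbert-function combinatorics.
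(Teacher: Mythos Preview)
The paper does not supply its own proof of this theorem; it is quoted as a known result, assembled from Klee--Novik \cite{Klee-Novik}, Novik--Swartz \cite{Novik,Novik-Swartz-I,Novik-Swartz-II,Novik-Swartz-III,Swartz}, and Adiprasito \cite{Adiprasito}, with the explicit remark that references to the WLP may be dropped because Adiprasito has established it for homology spheres. Your sketch accurately reconstructs precisely this route through the literature---identifying $h''$ as the Hilbert function of the Novik--Swartz quotient $A/J$ (giving nonnegativity without orientability), using orientability to make $A/J$ Gorenstein (giving Dehn--Sommerville), and feeding Adiprasito's WLP for sphere links through the Novik--Swartz machinery to obtain injectivity of multiplication by a generic linear form and hence unimodality and the M-sequence property via Macaulay---so there is nothing to compare beyond noting that your outline matches the cited sources the paper is invoking.
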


Note that $h''_{j-1}\leq h''_j$ is equivalent to $g''_j\geq 0$. In fact, the following stronger version of the Manifold $g$-Theorem is true \cite[Theorem 13]{Klee-Novik}, where all references to the WLP can again be omitted thanks to \cite{Adiprasito}:

\begin{theorem}
\label{thm:SMGC}
Suppose $\FF$ and $M$ satisfy the assumptions of Theorem \ref{thm:h''-vectors}. Define $\tilde g_j=g''_j-\binom{d+1}{j-1}\beta_{j-1}$. Then:
\begin{enumerate}
\item $\tilde g_j$ is nonnegative for $j=0,1,\ldots,\lfloor\frac{d+1}2\rfloor$,
\item $(\tilde g_0,\tilde g_1,\tilde g_2,\ldots,\tilde g_{\lfloor\frac{d+1}2\rfloor})$ is an M-sequence.
\end{enumerate}
\end{theorem}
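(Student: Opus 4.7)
The plan is to reduce Theorem \ref{thm:SMGC} to the conditional version of \cite[Theorem 13]{Klee-Novik} and then discharge the only remaining hypothesis (the Weak Lefschetz Property for homology spheres) by appealing to Adiprasito \cite{Adiprasito}. The whole argument is algebraic, taking place inside a generic Artinian reduction of the Stanley--Reisner ring.

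First I would fix a triangulation $K$ of $M$, form its face ring $\FF[K]$, and (using that $\FF$ is infinite) pick a generic linear system of parameters $\theta_1,\ldots,\theta_{d+1}$. The Artinian quotient $A(K):=\FF[K]/(\theta_1,\ldots,\theta_{d+1})$ has Hilbert function equal to the $h$-vector of $K$. Following Novik--Swartz \cite{Novik-Swartz-II,Novik-Swartz-III}, I would then quotient further by the image of a socle element realizing the $\FF$-fundamental class of $M$; the resulting standard graded ring $A''(K)$ has Hilbert function equal to the $h''$-vector, so that $g''_j=\dim_\FF A''(K)_j-\dim_\FF A''(K)_{j-1}$.

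Second, I would invoke Adiprasito's unconditional WLP for homology spheres: for each vertex link (which is a homology $(d-1)$-sphere) multiplication by a generic linear form is injective up to the middle degree. Via the Schenzel/Novik--Swartz link decomposition this upgrades to the statement that a generic $\omega\in A''(K)_1$ induces surjections $A''(K)_{j-1}\twoheadrightarrow A''(K)_j$ for $j\le\lfloor(d+1)/2\rfloor$, modulo a correction of size exactly $\binom{d+1}{j-1}\beta_{j-1}$ coming from the non-spherical homology of $M$. Subtracting this correction is precisely the definition of $\tilde g_j$, and so $\tilde g_j=\dim_\FF \bigl(A''(K)/\omega A''(K)\bigr)_j\ge 0$. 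This establishes (1). For (2), note that $A''(K)/\omega A''(K)$ is a standard graded $\FF$-algebra generated in degree one; Macaulay's theorem on Hilbert functions of such algebras says exactly that its Hilbert function is an M-sequence, which is (2).

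The main obstacle is the one that the paper already flags: the Weak Lefschetz Property for homology spheres, without which Klee--Novik's theorem is only a conditional statement. Everything else -- the identification of the $h''$-vector as a Hilbert function via the socle quotient, the link-based propagation from spheres to manifolds, and the Macaulay characterization of M-sequences -- is, by now, essentially bookkeeping with the face-ring identities already recorded above, provided one is careful with the indexing conventions for $h$, $h''$, $g''$ and $\tilde g$.
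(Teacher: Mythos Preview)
Your proposal is essentially the paper's own argument: the paper does not prove Theorem~\ref{thm:SMGC} directly but simply cites \cite[Theorem~13]{Klee-Novik} and observes that Adiprasito's result \cite{Adiprasito} discharges the WLP hypothesis, which is exactly the reduction you outline. One technical slip worth correcting in your sketch: for a homology manifold that is not Cohen--Macaulay the Hilbert function of the generic Artinian reduction $A(K)$ is the $h'$-vector (Schenzel's formula), not the $h$-vector; it is only after quotienting by the interior socle as in \cite{Novik-Swartz-II,Novik-Swartz-III} that one obtains $h''$, so adjust that sentence accordingly.
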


Note that the first part of the Manifold $g$-Theorem (nonnegativity of $g''$-vectors) can be seen as a version of the GLBT for manifolds. As such, it should also have a corresponding version of the LBT, i.e. a corresponding set of inequalities for the $f$-vectors. Such a statement is undoubtedly well-known to enumerative combinatorialists, but we have been unable to find it stated explicitly in the literature, so we include a proof for completeness. Note that the bound for each $f$-number only differs from the bound of Theorem \ref{thm:LBT} by an additional summand depending on the Betti numbers.

\begin{theorem}
\label{thm:LBTM}
Under the assumptions of Theorem \ref{thm:h''-vectors}, we have the following bounds:
$$f_i\ge f_0\cdot{d+1 \choose i}-i\cdot{d+2\choose i+1}+{d+1\choose i+1}\sum_{j=0}^i{i\choose j}\beta_j\ \ \textrm{for\ }i=0,\ldots,d-1$$
and
$$f_d\ge f_0\cdot d- (d+2)(d-1)+\sum_{j=0}^{d-1}{d\choose j}\beta_j.$$
Moreover, by adding up all inequalities we obtain an estimate for the total number of simplices in the triangulation of $M$:
$$(f_0+\ldots+f_d)\ge 2[(f_0-d)(2^{d+1}-1)+1]+\sum_{j=0}^{d-1}\left[\binom{d}{j}+\sum_{i=j}^{d-1}\binom{d+1}{i+1}\binom{i}{j}\right]\beta_j.$$
\end{theorem}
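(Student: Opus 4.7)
The plan is to lift the Barnette--Kalai proof of Theorem~\ref{thm:LBT} from the ordinary $h$-vector to the $h''$-vector. By Theorem~\ref{thm:h''-vectors} the $h''$-vector of $M$ is symmetric under Dehn--Sommerville and unimodal up to the middle, so it enjoys exactly the structural properties that the classical LBT proof extracts from the $h$-vector of a simplicial sphere. Substituting $h_j = h''_j + \binom{d+1}{j}B_j$, where $B_j:=\sum_{k=0}^j(-1)^{j-k}\beta_{k-1}$ (together with $h''_{d+1}=h''_0=1$ at the top index), into the inversion formula $f_{i-1}=\sum_{j=0}^i\binom{d+1-j}{d+1-i}h_j$ decomposes
\[
f_{i-1} \;=\; F''_{i-1} + E_i, \quad F''_{i-1}:=\sum_{j=0}^i\binom{d+1-j}{d+1-i}h''_j, \quad E_i:=\sum_{j=1}^i\binom{d+1-j}{d+1-i}\binom{d+1}{j}B_j.
\]
Since $M$ is connected, $\beta_0=0$, so $h''_1=h_1=f_0-(d+1)$ and $F''_0=f_0$.

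Applying the classical argument verbatim to $h''$ in place of $h$ (using $h''_0=1$, $h''_j=h''_{d+1-j}$, and $g''_j\ge 0$ for $j\le\lfloor(d+1)/2\rfloor$ from Theorem~\ref{thm:h''-vectors}) yields
\[
F''_{i-1} \geq f_0\binom{d+1}{i-1} - (i-1)\binom{d+2}{i} \textrm{ for } 1\le i\le d, \quad F''_d \geq f_0\cdot d - (d+2)(d-1).
\]
The correction $E_i$ is then evaluated exactly by swapping the order of summation; the coefficient of $\beta_l$ comes out to
\[
\sum_{j=l+1}^i(-1)^{j-1-l}\binom{d+1-j}{d+1-i}\binom{d+1}{j} \;=\; \binom{d+1}{i}\binom{i-1}{l},
\]
a Chu--Vandermonde-type identity provable by induction on $i$ or by interpreting the alternating sum as a finite difference. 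Hence $E_i=\binom{d+1}{i}\sum_{l=0}^{i-1}\binom{i-1}{l}\beta_l$, and re-indexing $i-1\to i$ yields the stated lower bound for $f_i$. The $f_d$-bound follows by the same recipe: the additional contribution from $h_{d+1}$ (arising from $h''_{d+1}=1$ together with the separate top-index Dehn--Sommerville term) collapses via an analogous alternating-sum identity to produce the coefficient $\binom{d}{j}$ of $\beta_j$.

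Finally, the total-simplex bound is obtained by summing all the individual inequalities. The non-Betti terms reproduce $2[(f_0-d)(2^{d+1}-1)+1]$ exactly as in the proof of Theorem~\ref{thm:LBT}, while switching the outer sums $\sum_i\sum_j\to\sum_j\sum_i$ reorganises the Betti contributions into the stated double-indexed expression. The main technical obstacle is the combinatorial identity giving the coefficient of $\beta_l$ (together with its top-dimensional counterpart); both are routine in spirit but require careful handling of the alternating signs.
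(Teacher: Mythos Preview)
Your proposal is correct and essentially identical to the paper's proof: the paper defines $f''_{i-1}$ (your $F''_{i-1}$), relates it to $f_{i-1}$ via exactly the same alternating-sum identity (proved there using the trinomial revision $\binom{d+1-j}{d+1-i}\binom{d+1}{j}=\binom{d+1}{i}\binom{i}{j}$ followed by the partial alternating binomial sum), and then bounds $f''_{i-1}$ from below by rewriting it in terms of $g''$-numbers and invoking Dehn--Sommerville together with $g''_j\ge 0$, i.e.\ precisely the McMullen--Walkup argument you invoke.
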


\begin{proof}
This follows by a simple modification of the original proof of McMullen and Walkup \cite{McMullen-Walkup} that GLBT implies LBT. For the convenience of the reader we provide the full argument here. First define the modified face numbers as the inversion of the $h''$-numbers:
\[
f''_{i-1}:=\sum_{j=0}^i\binom{d+1-j}{d+1-i}h''_j,\qquad i\in\ZZ.
\]
These can be now be expressed in terms of the usual $f$-numbers, by substituting the expressions for $h''_j$ in terms of $h_j$ and the Betti numbers into the equation. For $0\leq i\leq d$, we proceed as follows:
\begin{align*}
f''_{i-1}&=\sum_{j=0}^i\binom{d+1-j}{d+1-i}h_j-\sum_{j=0}^i\sum_{k=0}^j\binom{d+1-j}{d+1-i}\binom{d+1}{j}(-1)^{j-k}\beta_{k-1}\\
&=f_{i-1}-\sum_{k=0}^i\left[\sum_{j=k}^i\binom{d+1-j}{d+1-i}\binom{d+1}{j}(-1)^{j}\right](-1)^k\beta_{k-1}
\end{align*}
With some rearranging, the sum in the square brackets can be expressed as follows:
\begin{align*}
\sum_{j=k}^i\binom{d+1-j}{d+1-i}\binom{d+1}{j}(-1)^{j}&=\binom{d+1}{i}\sum_{j=k}^i\binom{i}{j}(-1)^{j}\\
&=\binom{d+1}{i}\cdot(-1)^k\binom{i-1}{k-1}.
\end{align*}
Therefore,
\[
f''_{i-1}=f_{i-1}-\binom{d+1}{i}\sum_{k=0}^i\binom{i-1}{k-1}\beta_{k-1}.
\]
For $i=d+1$, exactly the same computation works, except that $\beta_d$ is replaced by $0$. This gives the final result
\[
f''_d=f_d-\sum_{k=0}^{d}\binom{d}{k-1}\beta_{k-1}.
\]
Note that the algebraic relations between the $f''$- and $h''$-numbers are the same as those between the usual $f$- and $h$-numbers. In particular, we can now also express the $f''$-numbers in terms of the $g''$-numbers as follows:
\[
f''_{i-1}:=\sum_{j=0}^i\binom{d+2-j}{d+2-i}g''_j,\qquad i\in\ZZ.
\]
Note that the upper summation limit can be replaced by $d+2$ without changing the value, so we can write:
\[
f''_i=\sum_{j=0}^{d+2}\binom{d+2-j}{d+1-i}g''_j.
\]
Now we use the Dehn-Sommerville relations for the $h''$-numbers. In terms of the $g''$-numbers these state that $g''_j=-g''_{d+2-j}$, $j\in\ZZ$. Therefore we can express $g''_j$ for $j\geq\lfloor\frac{d+2}{2}\rfloor+1$ in terms of the $g''_j$ for $j\leq\lfloor\frac{d+2}{2}\rfloor$ to obtain:
\[
f''_i=\sum_{j=0}^{\lfloor\frac{d+2}2\rfloor}\left[\binom{d+2-j}{d+1-i}-\binom{j}{d+1-i}\right]g''_j.
\]
Note that for even $d$ this uses the fact that $g''_{\frac{d}2+1}=0$. Now observe that all the terms in the obtained sum are nonnegative, therefore we have:
\[
f''_i\geq\sum_{j=0}^1\left[\binom{d+2-j}{d+1-i}-\binom{j}{d+1-i}\right]g''_j.
\]
Substituting the expression for $f''_i$ in terms of $f_i$ and the Betti numbers into the inequality, and observing that $g''_0=f_{-1}=1$ and $g''_1=f_0-d-2$ concludes the proof.
\end{proof}

The Strong Manifold $g$-Theorem comes with an even stronger version of the GLBT:

\begin{theorem}
\label{thm:SLBTM}
Under the assumptions of Theorem \ref{thm:h''-vectors}, we have the following bounds:
\begin{multline*}
f_i\ge f_0\cdot{d+1 \choose i}-i\cdot{d+2\choose i+1}+{d+1\choose i+1}\sum_{j=0}^i{i\choose j}\beta_j\\+\sum_{j=2}^{\lfloor\frac{d+2}2\rfloor}\left[\binom{d+2-j}{d+1-i}-\binom{j}{d+1-i}\right]\binom{d+1}{j-1}\beta_{j-1}\ \ \textrm{for\ }i=0,\ldots,d-1
\end{multline*}
and
$$f_d\ge f_0\cdot d- (d+2)(d-1)+\sum_{j=0}^{d-1}{d\choose j}\beta_j+\sum_{j=2}^{\lfloor\frac{d+2}2\rfloor}(d+2-2j)\binom{d+1}{j-1}\beta_{j-1}.$$
\end{theorem}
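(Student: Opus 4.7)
The plan is to mirror the proof of Theorem \ref{thm:LBTM} almost verbatim, simply feeding in the sharper $g''$-bounds from the Strong Manifold $g$-Theorem. The decisive identity
\[
f''_i=\sum_{j=0}^{\lfloor(d+2)/2\rfloor}\left[\binom{d+2-j}{d+1-i}-\binom{j}{d+1-i}\right]g''_j,
\]
already established inside that earlier proof from the inversion between the $f''$- and $g''$-numbers together with the Dehn--Sommerville symmetry $g''_j=-g''_{d+2-j}$, will be our starting point.

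The new input is Theorem \ref{thm:SMGC}: the nonnegativity of $\tilde g_j=g''_j-\binom{d+1}{j-1}\beta_{j-1}$ yields $g''_j\geq\binom{d+1}{j-1}\beta_{j-1}$ for $j\geq 2$, while we still have $g''_0=1$ and $g''_1=f_0-d-2$. Because the coefficient $\binom{d+2-j}{d+1-i}-\binom{j}{d+1-i}$ is nonnegative throughout the summation range $0\leq j\leq(d+2)/2$, these substitutions preserve the inequality and give
\[
f''_i\geq\binom{d+2}{d+1-i}+(f_0-d-2)\binom{d+1}{i}+\sum_{j=2}^{\lfloor(d+2)/2\rfloor}\left[\binom{d+2-j}{d+1-i}-\binom{j}{d+1-i}\right]\binom{d+1}{j-1}\beta_{j-1}
\]
for $i\leq d-1$. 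The first two terms collapse to $f_0\binom{d+1}{i}-i\binom{d+2}{i+1}$ by the same elementary manipulation used in the proof of Theorem \ref{thm:LBTM}, which rests on $(d+2)\binom{d+1}{i}=(i+1)\binom{d+2}{i+1}$. For even $d$ one should also note that $g''_{(d+2)/2}=0$, so no boundary term is lost.

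The final step converts $f''_i$ back to $f_i$ using the relation
\[
f''_i=f_i-\binom{d+1}{i+1}\sum_{j=0}^i\binom{i}{j}\beta_j
\]
derived in the proof of Theorem \ref{thm:LBTM}; moving the Betti correction to the other side yields the claimed inequality for $i=0,\ldots,d-1$. The case $i=d$ is handled identically with the variant $f''_d=f_d-\sum_{j=0}^{d-1}\binom{d}{j}\beta_j$ and the specialization $\binom{d+2-j}{1}-\binom{j}{1}=d+2-2j$, producing the second stated bound. I expect no real obstacle beyond careful bookkeeping: the only conceptually new ingredient over the proof of Theorem \ref{thm:LBTM} is the sharper lower bound on $g''_j$, and the main thing to verify is that the coefficients of the surviving $\beta_{j-1}$ terms remain nonnegative, which is immediate from $j\leq(d+2)/2$.
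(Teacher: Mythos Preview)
Your proposal is correct and follows essentially the same route as the paper: starting from the Dehn--Sommerville--symmetrized expression for $f''_i$ derived in the proof of Theorem \ref{thm:LBTM}, replacing $g''_j$ for $j\ge 2$ by the Strong Manifold $g$-Theorem lower bound $\binom{d+1}{j-1}\beta_{j-1}$, and then translating back from $f''_i$ to $f_i$. The paper's own proof is simply terser, phrasing the last two steps as ``comparing this with the bound of Theorem \ref{thm:LBTM}'', whereas you spell them out explicitly.
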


\begin{proof}
As the starting point, we use the following expression that we derived in the proof of Theorem \ref{thm:LBTM}:
\[
f''_i=\sum_{j=0}^{\lfloor\frac{d+2}2\rfloor}\left[\binom{d+2-j}{d+1-i}-\binom{j}{d+1-i}\right]g''_j.
\]
Now, by Theorem \ref{thm:SMGC}, we know that $g''_j\geq\binom{d+1}{j-1}\beta_{j-1}$. Therefore:
\begin{multline*}
f''_i\geq\sum_{j=0}^1\left[\binom{d+2-j}{d+1-i}-\binom{j}{d+1-i}\right]g''_j\\+\sum_{j=2}^{\lfloor\frac{d+2}2\rfloor}\left[\binom{d+2-j}{d+1-i}-\binom{j}{d+1-i}\right]\binom{d+1}{j-1}\beta_{j-1}.
\end{multline*}
Comparing this with the bound of Theorem \ref{thm:LBTM} gives the result.
\end{proof}

\begin{remark}
To use these results, the field of coefficients should be infinite, so we cannot use $\ZZ_2$-cohomology. Using rational cohomology therefore seems like a natural candidate. Furthermore, because of the orientability hypothesis, we can only use the result for $G_k(\RR^n)$ where $n$ is even. However, the improvement in that case seems significant.
\end{remark}

\begin{example}
The rational cohomology of $G_3(\RR^8)$, which is $15$-dimensional, is given by \cite{Wendt}:
\[
H^*(G_3(\RR^8);\QQ)=\QQ[p_1,r]/(p_1^3,r^2),\qquad \deg p_1 = 4,\deg r = 7.
\]
The homogeneous generators in the various degrees are $1,p_1,r,p_1^2,p_1r,p_1^2r$ and the corresponding Poincar\'e polynomial is $1+t^4+t^7+t^8+t^{11}+t^{15}$. Example \ref{ex:g3n} implies that $f_0\leq 117$. This gives the following results (the first column gives the lower bounds using Theorem \ref{thm:LBT}, the second column gives the lower bounds using Theorem \ref{thm:LBTM} and the third one gives the lower bounds using Theorem \ref{thm:SLBTM}):

\begin{center}
\begin{tabular}{|l||r|r|r|}
\hline
&$g$&$g''$&$\tilde g$\\
\hline
\hline
$f_0$&$117$&$117$&$117$\\
\hline
$f_1$&$1736$&$1736$&$1736$\\
\hline
$f_2$&$12680$&$12680$&$12680$\\
\hline
$f_3$&$58380$&$58380$&$58380$\\
\hline
$f_4$&$188188$&$192556$&$194376$\\
\hline
$f_5$&$449176$&$489216$&$511056$\\
\hline
$f_6$&$820248$&$991848$&$1111968$\\
\hline
$f_7$&$1168310$&$1631630$&$2043470$\\
\hline
$f_8$&$1311310$&$2215070$&$3207490$\\
\hline
$f_9$&$1163448$&$2532816$&$4294576$\\
\hline
$f_{10}$&$813176$&$2451176$&$4773496$\\
\hline
$f_{11}$&$442988$&$1946308$&$4186728$\\
\hline
$f_{12}$&$184380$&$1189020$&$2721460$\\
\hline
$f_{13}$&$56680$&$512200$&$1214720$\\
\hline
$f_{14}$&$12136$&$136936$&$330376$\\
\hline
$f_{15}$&$1517$&$17117$&$41297$\\
\hline
\hline
$\sum$&$6684470$&$14378806$&$24703926$\\
\hline
\end{tabular}
\end{center}
\end{example}

Note that this approach can be used anytime we know the Poincar\'e polynomial for the cohomology of the space we are interested in. However, to use the (Strong) Manifold $g$-Theorem, it seems to be essential that the manifold in question is orientable. We will now show that the number of facets in a triangulation of $G_2(\RR^n)$ for even $n$ (so that the corresponding Grassmann manifold is orientable) must grow not just as a cubical function of $n$ as seen in Theorem \ref{thm:facets}, but in fact as an exponential function of $n$.

\begin{theorem}
As above, let $f_d$ be the number of facets in a minimal triangulation of $G_2(\RR^n)$. Then assuming $n$ is even, we have 
\[
f_d\geq 4^{n-3}+(-1)^{\frac{n}{2}+1}2^{n-3}+(n-2)(n^2-3n+6).
\]
In particular, it grows (at least) as an exponential function in $n$.
\end{theorem}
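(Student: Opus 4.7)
The plan is to apply Theorem \ref{thm:LBTM} with $\FF = \QQ$, which is justified because for even $n$ the Grassmannian $G_2(\RR^n)$ is a closed orientable manifold of dimension $d = 2n-4$. The facet bound from that theorem reads
\[
f_d \ge f_0 \cdot d - (d+2)(d-1) + \sum_{j=0}^{d-1} \binom{d}{j} \beta_j,
\]
so the exponential growth in $n$ must come entirely from the Betti sum, while $f_0$ contributes only a polynomial correction.

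Writing $n = 2m$ and invoking the Casian--Kodama formula, the Poincar\'e polynomial of $G_2(\RR^{2m})$ is
\[
\qbinom{m}{1}{t^4} = 1 + t^4 + t^8 + \dots + t^{4(m-1)},
\]
so the reduced rational Betti numbers are $\beta_{4i} = 1$ for $1 \le i \le m-1$ and vanish in all other degrees. Restricting to $0 \le j \le d-1 = 4m-5$ drops the top nonzero Betti number $\beta_{4m-4}$, as well as the (reduced) $\beta_0 = 0$, leaving $S := \sum_{i=1}^{m-2}\binom{4m-4}{4i}$ as the only contribution.

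Next I would evaluate $S$ by the standard roots-of-unity filter applied to $(1+x)^{4m-4}$ at $x \in \{1, i, -1, -i\}$. This yields
\[
\sum_{i=0}^{m-1}\binom{4m-4}{4i} \;=\; \tfrac{1}{4}\bigl[2^{4m-4} + 2\,\mathrm{Re}\,(1+i)^{4m-4}\bigr] \;=\; 2^{4m-6} + (-1)^{m-1}\, 2^{2m-3},
\]
using $(1+i)^{4m-4} = 2^{2m-2}(-1)^{m-1}$. Subtracting the two boundary terms $\binom{4m-4}{0} = \binom{4m-4}{4m-4} = 1$ and switching back to $n = 2m$ gives
\[
S \;=\; 4^{n-3} + (-1)^{n/2+1}\, 2^{n-3} - 2.
\]

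For the polynomial part, I would use Theorem \ref{thm:vertices gkn} to obtain $f_0 \ge (n-2)^2 + 2^s(2n - 2^s - 1)$, where $2^s < n \le 2^{s+1}$. A short verification treating the right-hand side as a downward-opening quadratic in $a := 2^s$ on the interval $[n/2, n-1]$ shows $(n-2)^2 + a(2n - a - 1) \ge n(n+1)/2$ whenever $n \ge 4$. Substituting $f_0 \ge n(n+1)/2$ into the bound of Theorem \ref{thm:LBTM} gives $f_0(2n-4) \ge n(n+1)(n-2)$, and combining with the constant $-(2n-2)(2n-5)$ and with $S$ yields
\[
f_d \;\ge\; n(n+1)(n-2) - (2n-2)(2n-5) - 2 + 4^{n-3} + (-1)^{n/2+1}\, 2^{n-3},
\]
whose polynomial part simplifies directly to $(n-2)(n^2 - 3n + 6)$, producing the announced inequality.

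The main obstacle is the roots-of-unity computation together with the careful bookkeeping of which indices contribute to the Betti sum, since the truncation at $j = d-1$ and the reduced convention for $\beta_0$ are precisely what generate the extra $-2$ that cancels cleanly against the polynomial remainder. The polynomial estimate $f_0 \ge n(n+1)/2$ is mildly delicate only because the bound from Theorem \ref{thm:vertices gkn} is parametrized by $2^s$ rather than by $n$, so the inequality must be checked uniformly across all admissible powers of two.
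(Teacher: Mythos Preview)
Your argument is correct and is essentially the paper's proof: both apply Theorem \ref{thm:LBTM} to the rational Betti numbers coming from the Poincar\'e polynomial $1+t^4+\cdots+t^{4m-4}$, evaluate $\sum_{k=1}^{m-2}\binom{4m-4}{4k}$ (you make the roots-of-unity filter explicit, the paper just states the result), and combine with the polynomial facet bound for $G_2(\RR^n)$ arising from the vertex estimate $f_0\ge n(n+1)/2$. The only cosmetic difference is that the paper packages the polynomial part by citing Theorem \ref{thm:facets} directly, whereas you rederive it by first invoking the sharper Theorem \ref{thm:vertices gkn} and then checking that it dominates $n(n+1)/2$; either way one arrives at $(n-2)(n^2-3n+6)$.
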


\begin{proof}
Let $n=2m$. The Poincar\'e polynomial of $G_2(\RR^n)$ is
\[
P(x)=1+x^4+\ldots+x^{4m-4}.
\]
The bound for the generic case we stated in Theorem \ref{thm:facets} reduces to
\[
f_d\geq(n-2)(n^2-3n+6)+2
\]
in the case at hand. Theorem \ref{thm:LBTM} allows us to improve this by an additional summand coming from the Betti numbers. This summand is equal to:
\[
\sum_{j=0}^{d-1}{d\choose j}\beta_j=\sum_{k=1}^{m-2}{4m-4\choose 4k}=4^{2m-3}+(-1)^{m+1}2^{2m-3}-2.
\]
Summing everything together results in the stated bound.
\end{proof}

%
%

Finally, we show that much less than the full strength of the Manifold $g$-Theorem is sufficient to establish exponential growth of the number of facets. In fact, the nonnegativity of the $h''$-numbers is completely sufficient to establish it. In particular, no assumption of orientability is needed.

\begin{theorem}
Suppose $3\leq k\leq\lfloor\frac n2\rfloor$ and $d=k(n-k)$. Let $f_d$ be the number of facets in a minimal triangulation of $G_k(\RR^n)$. Then, $f_d$ grows (at least) as an exponential function in $n$.
\end{theorem}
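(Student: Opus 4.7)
My plan is to extract from the nonnegativity of the $h''$-numbers (the last sentence of Theorem~\ref{thm:h''-vectors}, which needs no orientability hypothesis) a single inequality of the form
\[
f_d\;\geq\;\sum_{j=0}^{d-1}\binom{d}{j}\beta_j,
\]
where the $\beta_j$ are rational Betti numbers of $G_k(\RR^n)$, and then to show that its right-hand side alone already grows exponentially in $n$. This inequality is obtained by combining the algebraic identity $f_d=f''_d+\sum_{j=0}^{d-1}\binom{d}{j}\beta_j$ derived inside the proof of Theorem~\ref{thm:LBTM} with the immediate observation that
\[
f''_d\;=\;\sum_{j=0}^{d+1}\binom{d+1-j}{0}h''_j\;=\;\sum_{j=0}^{d+1}h''_j\;\geq\;0.
\]
The Dehn-Sommerville relations (which do require orientability) are never invoked here, which is precisely why the orientability hypothesis can be dropped.

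For the lower bound on the sum I would substitute the Casian-Kodama Poincar\'e polynomial of $G_k(\RR^n)$. Since every coefficient of a Gaussian binomial $\binom{a}{b}_q$ is a positive integer, each Betti number $\beta_i$ lying on the support of that polynomial is at least $1$. Thus it suffices to exhibit a single such index $i$ for which the ratio $i/d$ lies in a compact subinterval of $(0,1)$ that is independent of $k$ and $n$. Once this is done, the entropy estimate
\[
\binom{d}{i}\;\geq\;\frac{2^{H(i/d)\,d}}{d+1}
\]
shows that the single term $\binom{d}{i}\beta_i$ already dominates $c\cdot 2^{\gamma d}$ for some universal $\gamma>0$, and since $d=k(n-k)\geq 3(n-3)$ throughout the range $3\leq k\leq\lfloor n/2\rfloor$, this bound is exponential in $n$.

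The choice of $i$ is a short case analysis in the parities of $k$ and $n$. In the three cases where the Poincar\'e polynomial equals $\binom{m}{j}_{t^4}$, taking $i$ to be the largest multiple of $4$ on the support not exceeding $d/2$ gives $i/d\in[\tfrac14,\tfrac12]$ for every admissible $(k,n)$. In the remaining case $k$ odd and $n$ even, where the Poincar\'e polynomial factors as $(1+t^{2m-1})\binom{m-1}{j}_{t^4}$, the extra factor splits the support into two shifted blocks; one shows that the block closer to $d/2$ contains an index $i$ with $|i-d/2|=O(n)$, and since $d\geq 3(n-3)$ this displacement is a bounded fraction of $d$ uniformly in $k$, keeping $i/d$ inside a compact subinterval of $(0,1)$.

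The main technical obstacle is exactly this last case. One has to verify, uniformly in $3\leq k\leq\lfloor n/2\rfloor$, that the displacement $|i-d/2|$ remains small enough relative to $d$ for the entropy $H(i/d)$ to stay bounded below by a positive constant. This is an elementary estimate, but it is the only step where the inequality $k\leq\lfloor n/2\rfloor$ is genuinely used to guarantee uniformity.
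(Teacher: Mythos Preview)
Your derivation of the inequality $f_d\geq\sum_{j}\binom{d}{j}\beta_j$ from nonnegativity of the $h''$-numbers is exactly the paper's argument, merely routed through the identity $f_d=f''_d+\sum_j\binom{d}{j}\beta_j$ rather than through $f_d=\sum_j h_j$ directly; the two computations are line-for-line equivalent, and both avoid Dehn--Sommerville and hence orientability.

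The only real difference is in how you extract exponential growth. You search for a nonzero $\beta_i$ with $i/d$ trapped in a fixed subinterval of $(0,1)$ and then appeal to an entropy bound, which is what generates your parity case analysis and the ``main technical obstacle'' in the $(k\text{ odd},\,n\text{ even})$ case. The paper bypasses all of this by choosing a single fixed index: $i=2n-8$ when $n$ is even and $i=2n-10$ when $n$ is odd. In every parity case this $i$ is a multiple of $4$ lying in the support of the relevant Gaussian binomial, so $\beta_i\geq 1$ by positivity of the coefficients, and one gets immediately $f_d\geq\binom{k(n-k)}{2n-8}$ (resp.\ $\binom{k(n-k)}{2n-10}$), which is exponential in $n$ since $k(n-k)\geq 3(n-3)$. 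Your approach is correct, but the paper's fixed choice of $i\approx 2n$ makes the case analysis and the uniformity worry you flag simply disappear.
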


\begin{proof}
We begin by expressing $f_d$ in terms of $h$-numbers using the inverse formula:
\[
f_d=\sum_{j=0}^{d+1}h_j.
\]
Nonnegativity of the $h''$-numbers can be restated as:
\[
h_j\geq\binom{d+1}{j}\sum_{i=0}^j(-1)^{j-i}\beta_{i-1}.
\]
Note that this is valid for $j=d+1$ as well. Plugging this into the first equation, we have:
\begin{align*}
f_d&\geq\sum_{j=0}^{d+1}\binom{d+1}{j}\sum_{i=0}^j(-1)^{j-i}\beta_{i-1}\\
&=\sum_{i=0}^{d+1}\left[\sum_{j=i}^{d+1}(-1)^j\binom{d+1}{j}\right](-1)^i\beta_{i-1}\\
&=\sum_{i=0}^{d+1}\binom{d}{i-1}\beta_{i-1}.
\end{align*}
Now recall the Poincar\'e polynomials of $G_k(\RR^n)$. Positivity of the coefficients of the $q$-analogs of binomial coefficients implies that the $(2n-8)$-th Betti number is nonzero for $n$ even and the $(2n-10)$-th Betti number is nonzero for $n$ odd. Therefore for $n$ even we have
\[
f_d\geq\binom{k(n-k)}{2n-8}
\]
and for $n$ odd we have
\[
f_d\geq\binom{k(n-k)}{2n-10}.
\]
In both cases, these are exponential in $n$.
\end{proof}

\begin{remark}
Using similar estimates, one can also show that the minimal number of faces $f_{d-j}$ in $G_k(\RR^n)$ for a fixed $k$ and a fixed codimension $j$ must grow exponentially as $n\to\infty$.
\end{remark}


\begin{thebibliography}{}
\bibitem{Adiprasito}
K.~Adiprasito, \emph{Combinatorial Lefschetz theorems beyond positivity},\\ arXiv:math.CO/1812.10454, 2018, 76 pages.
\bibitem{Barnette1}
D.~Barnette, \emph{The minimum number of vertices of a simple polytope}, Isr. J. Math, {\bf 10} (1971), 121--125.
\bibitem{Barnette2}
D.~Barnette, \emph{A proof of the lower bound conjecture for convex polytopes}, Pacific J. Math, {\bf 46} (1973), 349--354.
\bibitem{Brehm-Kuhnel}
U.~Brehm, W.~K\"uhnel, \emph{15-Vertex triangulations of 8-manifolds}, Math. Ann. {\bf 294} (1992), 167–193.
\bibitem{Casian-Kodama}
L.~Casian, Y.~Kodama, \emph{On the cohomology of real Grassmann manifolds}, arXiv:math.AG/1309.5520, 2013, 13 pages.
\bibitem{Gaifullin}
A.A.~Gaĭfullin, \emph{Configuration spaces, bistellar moves, and combinatorial formulas for the first Pontryagin class.}, Proc. Steklov Inst. Math. {\bf 268} (2010), 70–86. 
\bibitem{Gelfand-MacPherson}
I.M.~Gel'fand, R.D.~MacPherson, \emph{A combinatorial formula for the Pontrjagin classes}, 
Bull. Amer. Math. Soc. (N.S.) {\bf 26} (1992), 304–309. 
\bibitem{Gorodkov}
D.~Gorodkov, \emph{A 15-Vertex Triangulation of the Quaternionic Projective Plane}, 
Discr. Comp. Geom. {\bf 62} (2019), 348-373.
\bibitem{GMP}
D.~Govc, W.~Marzantowicz and P.~Pave\v si\'c, \emph{Estimates of covering type and the number of vertices of
minimal triangulations}, Discr. Comp. Geom. (2019), https://doi.org/10.1007/s00454-019-00092-z.
\bibitem{Gromov}
M.~Gromov, \emph{Partial Differential Relations}, (Springer, Berlin, Heidelberg, New York, 1986).
\bibitem{Hatcher}
A.~Hatcher, \emph{Algebraic Topology}, (Cambridge University Press, 2002).
\bibitem{Kalai}
G.~Kalai, \emph{Rigidity and the lower bound theorem 1}, Invent. Math. {\bf 88} (1987), 125--151.
\bibitem{Klee-Novik}
S.~Klee, I.~Novik, Face enumeration on simplicial complexes, In \emph{Recent trends in combinatorics}, (Springer, Cham, 2016), 653--686.
\bibitem{Kuhnel}
W.~K\"uhnel. \emph{Higherdimensional analogues of Cs\'asz\'ar’s torus}. Result. Math. {\bf 9} (1986) 95--106.
\bibitem{Lutz}
F.H.~Lutz, \emph{Triangulated Manifolds with Few Vertices: Combinatorial Manifolds},\\ arXiv:math.CO/0506372v1,
2005, 37 pages.
\bibitem{McMullen-Walkup}
P.~McMullen, D.~Walkup, \emph{A generalized lower-bound conjecture for simplicial polytopes}, Mathematika {\bf 18} (1971), 264--273.
\bibitem{Milin} 
L.~Milin, \emph{A combinatorial computation of the first Pontryagin class of the complex projective plane},
Geom. Dedicata {\bf 49} (1994), 253–291.
\bibitem{Murai-Nevo}
S.~Murai, E.~Nevo, \emph{On the generalized lower bound conjecture for polytopes and spheres}, Acta Math {\bf 210} (2013), 185--202.
\bibitem{Novik}
I.~Novik, \emph{Upper Bound Theorems for homology manifolds}, Isr. J. Math {\bf 108} (1998), 45--82. 
\bibitem{Novik-Swartz-I}
I.~Novik, E.~Swartz, \emph{Applications of Klee’s Dehn–Sommerville relations}, Discrete Comput. Geom. {\bf 42} (2009), 261--276.
\bibitem{Novik-Swartz-II}
I.~Novik, E.~Swartz, \emph{Gorenstein rings through face rings of manifolds}, Compos. Math {\bf 145} (2009), 993--1000.
\bibitem{Novik-Swartz-III}
I.~Novik, E.~Swartz, \emph{Socles of Buchsbaum modules, complexes and posets}, Adv. Math {\bf 222} (2009), 2059--2084.
\bibitem{Stanley}
R.~Stanley, \emph{The number of faces of a simplicial convex polytope}, Adv. Math {\bf 35} (1980), 236--238.
\bibitem{Stong}
R.~Stong, \emph{Cup products in Grassmannians}, Top. Appl. {\bf 13} (1982), 103-113.
\bibitem{Swartz}
E.~Swartz, \emph{Thirty-five years and counting}, arXiv:math.CO/1411.0987, 2014, 29 pages.
\bibitem{Wendt}
M.~Wendt (https://mathoverflow.net/users/50846/matthias-wendt), \emph{Rational Cohomology of Finite Real Grassmannian}, URL (version: 2017-09-27): https://mathoverflow.net/q/282125.
\end{thebibliography}
\end{document}